\documentclass[10pt]{article}
\usepackage[british]{babel}
\usepackage{mathrsfs}
\usepackage{amsmath}
\usepackage{amssymb}
\usepackage{amsthm}
\usepackage{enumerate}
\usepackage{epic}
\usepackage{color}

\textwidth = 15.50cm
\textheight = 22.00cm
\oddsidemargin = 0.12in
\evensidemargin = 0.12in
\setlength{\parindent}{0pt}
\setlength{\parskip}{5pt plus 2pt minus 1pt}

\numberwithin{equation}{section}
\makeatletter
\renewcommand{\subsection}{\@startsection
{subsection}{2}{0mm}{\baselineskip}{-0.25cm}
{\normalfont\normalsize\bf}}
\makeatother


\newtheorem{theorem}{Theorem}[section]
\newtheorem{proposition}[theorem]{Proposition}
\newtheorem{lemma}[theorem]{Lemma}
\newtheorem{corollary}[theorem]{Corollary}
\newtheorem{remark}[theorem]{Remark}
\newtheorem{result}[theorem]{Result}




\def\F{\mathbf F}

\def\cF{\mathcal F}

\def\cX{\mathcal X}

\def\xfq2{{\cX(\mathbb{F}_{q^2})}}
\def\F+xfq2{{\cF^+(\mathbb{F}_{q^2})}}
\def\xpfq2{{\cX^+(\mathbb{F}_{q^2})}}
\def\xmfq2{{\cX^-(\mathbb{F}_{q^2})}}
\sloppy
\title{The Structure of 2-Colored Best Match Graphs}
\author{Annachiara Korchmaros\footnote{Annachiara Korchmaros: Department of Mathematics and Systems Analysis, Aalto University, annachiara.korchmaros@aalto.fi}}

\begin{document}
\date{}
\maketitle
\begin{abstract}
Recent investigations in computational biology have focused on a family of 2-colored digraphs, called 2-colored best match graphs, which naturally arise from rooted
phylogenetic trees. Actually the defining properties of such graphs are unusual, and a natural question is whether they also have properties which well fit in structural graph theory. In this paper we prove that some underlying oriented bipartite graphs of a 2-colored best match graph are acyclic and we point out that the arising topological ordering can efficiently be used for constructing new families of 2-colored best match graphs.
\end{abstract}
\section{Introduction}
Let $\Gamma=\Gamma(V,E)$ be a bipartite digraph without loops and multiple edges. For a vertex $u$ of $\Gamma$, $v$ is an \emph{out-neighbor}  (\emph{in-neighbor}) of $u$ if $uv$ (respectively $vu$) is an edge of $\Gamma$. The set of all out-neighbors (in- neighbors) of $u$ is denoted by $N(u)$ and $N^-(u)$ respectively. A {\emph{ 2-colored best match graph}} (2-cBMG) is a bipartite graph such that $N(u)$ is not empty for any $u\in V$ and each of the following three properties holds for any two $u,v\in V$:
\begin{itemize} \label{Ns}
\item[{\bf{N1}}:] $u\cap N(v)=v\cap N(u)=\emptyset$ implies $N(u)\cap N(N(v))=N(v)\cap N(N(u))=\emptyset$.
\item[{\bf{N2}}:] {\mbox{$ N(N(N(u))) \subseteq N(u)$.}}
\item[{\bf{N3}}:]  $u\cap N(N(v))=v\cap N(N(u))=\emptyset$ together with $N(u)\cap N(v)\neq \emptyset$ implies $N^-(u)=N^-(v)$ and one of the inclusions $N(u)\subseteq N(v)$, $N(v)\subseteq N(u)$.
\end{itemize}
Before presenting our results we point out that such digraphs are strongly related to detection of ortologous genes;
see \cite{geiss,geiss1,geiss3}.
Let $T$ be a (rooted, phylogenetic) tree with leaf set $L$ and a surjective color-map $\sigma:L\rightarrow S$ for a non-empty color set $S$. Then $y\in L$ is a best match of $x\in L$, in symbols $x\rightarrow y$, if ${\rm{lca}}(x,y)\prec {\rm{lca}}(x,y')$ for all $y'\in L$ with $\sigma(y)=\sigma(y')$. Here ${\rm{lca}}$ stands for the last common ancestor, and the partial ordering $p\prec q$ occurs if $q$ is located above $p$ along the path connecting $p$ with the root of $T$. The associated colored best match graph (cBMG) is the directed graph on the vertex set $L$ where the arcs are the ordered pairs $xy$ with $x\rightarrow y$ and  $x\neq y$. It is a colored digraph with color map $\sigma$. If a colored digraph $G(T,\sigma)$ is isomorphic to the cBMG associated to the rooted tree $T$, then $T$ is said to explain the vertex-colored graph $(G,\sigma)$. In a cBMG, there is natural equivalence relation $\dot{\sim}$ where $x{\dot{\sim}}y$ if $x$ and $y$ have the same out-neighbors and in-neighbors. In particular, $x\dot{\sim}y$ implies that
$N(x)=N(y)$ but the converse is not always true. In the study of best match graphs, the most important case is $|S|=2$, since all cBMGs have induced subgraphs which are 2-cBMGs; see \cite[Theorem 9]{geiss}. The axiomatization of the concept of a colored best much graph is due to the authors of the pioneering work \cite{geiss} who showed that a 2-cBMG can equivalently be defined as a certain bipartite graph $\Gamma(V,E)$. In fact, for a connected 2-colored digraph with at least one out-neighbor for each of its vertex, \cite[Theorem 4]{geiss} states that there exists a (rooted phylogenetic) tree $T$ explaining $(G,\sigma)$ if and only if $(G,\sigma)$ has properties {\bf{N1}},{\bf{N2}}, and {\bf{N3}}.

It should be noticed however that {\bf{N1}} and {\bf{N3}} had not been considered in the literature on graph theory until the discovery of their links to evolutionary relatedness via phylogenetic trees. {\bf{N2}} was introduced, but only marginally studied, under the name ``bi-transitive'' property in the preprint \cite{das}.

This gives a motivation to consider graph-theoretic properties of ``structural type'' in digraphs satisfying at least one property between {\bf{N1}},{\bf{N2}}, and {\bf{N3}}.
We are mostly concerned with bi-transitivity, and our contributions are stated and proven in Section \ref{secpc}. It turns out that our results on circuits and paths fit in well with classical works in graph theory dating back to 1980's. In the discussion in Section \ref{secpc} we point out two important structural properties of bi-transitive digraphs, namely they has a long path, and some of the underlying oriented digraphs are acyclic. If we assume that {\bf{N1}} and {\bf{N3}} also hold then our results can be refined, but it remains unclear how deeply {\bf{N3}} can affect the structure of the digraph.

Since acyclic digraphs have topological ordering, the structure of a 2-cBMG can be locally investigated; see Section \ref{2cbmg}. Especially, the behavior of maximal and minimal vertices uncovers several new properties which can also be used as an efficient tool in classifying smaller 2-cBMGs; see Section \ref{ccs}.

\section{Background on digraphs and their paths and circuits}
In this paper $\Gamma$ stands for a digraph without loops and multiple edges. With the usual notation, $V$ is its vertex-set, $E$ is its edge-set where for $u,v\in V$,
the edge with tail $u$ and head $v$ is denoted by $uv$ or $[u,v]$. A {\emph{symmetric edge}} is a pair such that $uv,vu \in E$. A digraph $\Gamma$  with vertex set $\{1,\ldots, n\}$ is denoted by $\Gamma:=<n|[i_1,j_1],\ldots,[i_r,j_r]>$ with $E=\{[i_1,j_1],\ldots,[i_r,j_r]\}$. As in \cite{geisscor}, $\Gamma$ is called sink-free if each of its vertices has at least one out-neighbor.

A \emph{bipartite digraph} $\Gamma=\Gamma(V,E)$ is a digraph whose vertices can be divided into two disjoint sets $V_1$ and $V_2$ such that every edge connects a vertex in $V_1$ to one in $V_2$ or vice versa.
The two sets $V_1$ and $V_2$ may be thought of as a coloring of the graph with two colors where a \emph{coloring} is a labeling of the vertices with the two colors such that no two vertices sharing the same edge have the same color. We will refer to $V_1$ and $V_2$ as the color classes. A bipartite graph is \emph{balanced} if $|V_1|=|V_2|$.

For an ordered pair of vertices $(u,v)$ of $\Gamma$, $v$ is \emph{strongly connected} to $u$, if there exists a directed walk $u=u_0\rightarrow u_1 \rightarrow u_2 \rightarrow \cdots\rightarrow u_m \rightarrow u_{m+1}=v$ such that $u_iu_{i+1}$ is an edge of $\Gamma$ for every $0\le i \le m$. The \emph{length} of the directed walk is the number of edges in it, i.e. $m+1$. A \emph{directed trail} is a directed walk in which all edges are distinct. A \emph{directed path} is a directed trail in which all vertices are distinct. A \emph{directed circuit} is a non-empty directed trail in which the first and last vertices are repeated. A \emph{directed cycle} is a directed trail in which all vertices but the first and last are distinct. Two vertices $u$ and $v$ in $\Gamma$ are \emph{independent} if $uv\not \in E$ and $vu\not \in E$, i.e. $v\not \in N(u)$ and $u\not \in N(v)$. Any two equivalent vertices $u$ and $v$ of $\Gamma$ are independent, otherwise $u\in N(v)=N(u)$ would imply that $uu\in E$ contradicting the assumption that $\Gamma$ has no loops. If $\Gamma$ is sink-free, then any two equivalent vertices have the same color as they share a common out-neighbour.

A digraph is \emph{oriented} if $uv\in E$ implies $vu\not\in E$. We can derive a new graph $\tilde{\Gamma}$ from a bipartite digraph $\Gamma$ by keeping the same vertex set but eliminating all symmetric edges from $\Gamma$ such that if for some $u,v\in V$ both $uv$ and $vu$ are edges in $\Gamma$, then we keep exactly one of them. We stress that $\tilde{\Gamma}$ may have some vertex without out-neighbor, although it cannot contain isolated vertices. Clearly, $\tilde{\Gamma}$ is an oriented bipartite graph, and we call it an underlying oriented digraph of $\Gamma$. In the special case where $uv\in \tilde{\Gamma}$ if and only if $u'v'\in\tilde{\Gamma}$ for any $u'{\dot{\sim}}u$ and $v'{\dot{\sim}}v$, $\tilde{\Gamma}$ is called a $\dot{\sim}$ consistent underlying oriented digraph of $\Gamma$.

A
bipartite digraph $\Gamma(V,E)$ is \emph{bi-transitive} if for all vertices $u_1,u_2,v_1,v_2\in V$ with $u_1v_1,v_1u_2,u_2v_2 \in E$ we have $u_1v_2 \in E$. An oriented bipartite digraph $\Gamma(V,E)$ is a \emph{bitournament} if for any two vertices $u,v\in V$ with different colors, either $uv\in E$ or $vu\in E$. An oriented digraph has a \emph{topological vertex ordering} if its vertices can be labeled $u_1,u_2,\ldots,u_n$ such that for any edge $u_iu_j$ we have $i< j$. A vertex $u$ is \emph{minimal} if there is no vertex $v$ such that $v<u$ and $vu\in E$, that is, if $N^{-}(u)=\emptyset$; maximality for vertices is defined analogously.
A sufficient condition for an oriented digraph to have a topological ordering is to be acyclic, that is, there is no directed cycle in the digraph; see \cite{das}. Acyclic oriented digraphs are odd-even graphs; see \cite[Theorem 3.4]{das} and \cite{jensen}.  For a pair $(\mathcal{A},\mathcal{O})$ where $\mathcal{A}$ is a finite set of non-negative even integers and $\mathcal{O}$ is a set a positive odd integers, the associated \emph{odd-even} oriented digraph ${\vec{\mathcal{G}}}_{\mathcal{A}}(\mathcal{O})$ on vertex-set $\mathcal{A}$ has edge-set $E$ with $ab\in E$ when both $\frac{1}{2}(a+b)$ and $\frac{1}{2}(b-a)$ belong to $\mathcal{O}$. In fact, ${\vec{\mathcal{G}}}_{\mathcal{A}}(\mathcal{O})$ is an oriented bipartite graph $\Gamma(V_1,V,2,E)$ with color classes $V_1=\{a|a\equiv 0 \pmod{4},a\in \mathcal{A}\}$ and $V_2=\{a| a\equiv 2 \pmod{4}|,a\in \mathcal{A}\}$. Acyclic digraphs are also relevant in genomics data processing; see \cite{appl}.

The equivalence relation $\dot{\sim}$ gives rise to the \emph{quotient graph}  $\Gamma'(V',E')$ whose vertices are the equivalence classes and edges are defined as follows. Let $u'$ and $v'$ be two vertices of $\Gamma'$, then the ordered pair $u'v'$ is in $E'$ whenever $uv\in E$ for every $u\in u'$ and $v\in v'$.
In other words, $v'\in N(u')$ if and only if $v\in N(u)$ for every $u\in u'$ and $v\in v'$. One of the main properties of $\Gamma'$ is that it contains no two equivalent vertices. Several properties are shared between $\Gamma$ and its quotient $\Gamma'$ such as connectivity and being bipartite; see \cite{ACHK}. Furthermore, let $\tilde{\Gamma}$ be a $\dot{\sim}$ consistent underlying oriented digraph of $\Gamma$ with edge set $F$. The subgraph $\Delta$ of $\Gamma'$ with edges $u'v'$ where $uv\in F$ for every $u\in u'$ and $v\in v'$ is an underlying oriented graph of $\Gamma'$. If $\tilde{\Gamma}$ has a nontrivial cycle, then $\Delta$ also does. Therefore, if $\Delta$ is acyclic, then $\tilde{\Gamma}$ is also acyclic.

With the above notation, the defining properties of a 2-cBMG can be restated as follows. A 2-cMBG is a bipartite digraph with the following properties:
\begin{itemize}
\item[{\bf{N1}}:] if $u$ and $v$ are two independent vertices, then there exist no vertices $w,t$ such that $ut,vw,tw\in E$.
\item[{\bf{N2}}:] bi-transitive.
\item[{\bf{N3}}:] for any two vertices $u,v$ with a common out-neighbor, if there exists no vertex $w$ such that either $uw,wv\in E$, or $vw,wu\in E$, then they have the same in-neighbors and either all out-neighbors of  $u$ are also out-neighbors of $v$ or all out-neighbors of  $v$ are also out-neighbors of $u$.
\item[{\bf{N4}}:] sink-free.
\end{itemize}
We will also use the term ``almost 2-cBMG'' when each but at most one vertex has an out-neighbor.

The smallest connected 2-cBMGs are on $2$ vertices. They are isomorphic to $\Gamma(2):=<2\mid [1,2],[2,1]>$ and consist of a single symmetric edge. The connected 2-cBMGs on $3$ vertices are isomorphic to either $\Gamma_1(3):=<3\mid [1,3],[2,3],[3,2]>$ or $\Gamma_2(3):=<3\mid [1,3],[3,1],[2,3],[3,2]>$ where $\Gamma_2(3)$ contains two equivalent vertices, $1$ and $2$. The following example on $10$ vertices and color classes $\{1,2,3,4,5,6\}$, $\{7,8,9,10\}$ comes from \cite[Fig. 7]{geiss}:
\begin{equation}
\label{geex}
\begin{array}{lll}
\Gamma_{10}:=& <10\mid [1,7],[1,8],[1,9],[1,10], [2,8],[3,9],[4,10],[5,9],[6,9], \\
&{[5,10],[6,10],[7,1],[7,2],[7,3],[7,4],[7,5],[7,6],[8,2],[9,3],[10,4]>}.
\end{array}
\end{equation}
Removing the edge $[5,8]$ gives a 2-cBMG on $10$ vertices containing two equivalent vertices, namely $5$ and $6$. Furthermore, each of the vertices $1,2,3,4,7,8,9,10$ is the endpoint of a symmetric edge.

In a recent manuscript \cite{SSH}, 2-cBMGs are characterized as 2-colored digraphs containing no subgraph isomorphic to any of the following digraphs:
\begin{itemize}
\item a bipartite graph on four vertices $x_1,x_2,y_1,y_2$ with color classes $\{x_1,x_2\}$ and $\{y_1,y_2\}$ such that if $x_1y_1,y_2x_2,y_1x_2\in E$, then $x_1y_2\not\in E$.
\item a bipartite graph on four vertices $x_1,x_2,y_1,y_2$ with color classes $\{x_1,x_2\}$ and $\{y_1,y_2\}$ such that if $x_1y_1,y_1x_2,x_2y_2\in E$, then $x_1y_2\not\in E$.
\item a bipartite graph on five vertices $x_1,x_2,y_1,y_2,y_3$ with color classes $\{x_1,x_2\}$ and $\{y_1,y_2,y_3\}$ such that if $x_1y_1,x_2y_2,x_1y_3,x_2y_3\}$, then $x_1y_2, x_2y_1 \not\in E$.
\end{itemize}

Several papers give sufficient conditions for bipartite digraphs, in terms of the number of edges, to have cycles and paths with specified properties.
These conditions can be viewed as digraph versions or variants of similar conditions on undirected bipartite graphs which were widely studied since the 1980's; see \cite{am,am1,aj,ch,das,D,BJ,MMMM,MM,NV,wang,zang1,zang2}. We recall those which are related to the present investigation. The references are \cite{am,am1,ch,MMMM,zang1}.
\begin{result}
\label{resCMM} Let $\Gamma(V,E)$ be a bipartite digraph with color classes $V_1$ and $V_2$. Let $|V_1|=a,|V_2|=b$, $a\le b$, and $k=\min\{N(x)+N^-(x)|x\in U\cup V\}$.
\begin{itemize}
\item[(i)] If $|E|\ge 2ab-b+1$, then $\Gamma(V,E)$ has a cycle of length $2a$.
\item[(ii)] If $|E|\ge 2ab-(k+1)(a-k)+1$, then $\Gamma(V,E)$ has a cycle of length $2a$.
\item[(iii)] If $|E|\ge 2ab-k(a-k)+1$, then for any two vertices $u$ and $v$ which have different color, there is a path from $u$ to $v$ of length $2a-1$.
\item[(iv)] If $|E|\ge 2ab-a+2$, then for $u,v\in V$, any set of $a-1$ vertices is contained in a path of length at least $2(a-1)$ from $u$ to $v$ while for $u, v\in V$, there are paths from $u$ to $v$ and from $v$ to $u$ of every odd length $m$ with $3\le m \le 2a-1$.
\item[(v)] If $a\le 2k-1$, then $\Gamma(V,E)$ has a cycle of length $2a$, unless either $b>a=2k-1$ and $\Gamma(V,E)\cong\Gamma_1(a,b)$ or $k = 2$ and $\Gamma(V,E)\cong \Gamma_2(3,b)$.
\end{itemize}
\end{result}
\begin{result}
\label{zanghA} Let $\Gamma(V,E)$ be an oriented bipartite digraph whose vertex in-degree is at least $h\geq 0$ and out-degree is at least $k\geq 0$ for all vertices. Then $\Gamma(V,E)$ contains either a directed cycle of length at least $2(k+h)$ or a directed path of length at least $2(k+h)+3$.
\end{result}
\begin{result}\label{wangth}
Let $\Gamma(V,E)$ be a balanced directed bipartite graph with $|V_1|=|V_2|n\ge 2$. Suppose that $N(u)+N^-(u)+N(v)+N^-(v)>3n + 1$ for all $u\in U,v\in V$. Then $\Gamma=(V,E)$ contains two vertex-disjoint directed cycles of lengths $2n_1$ and $2n_2$, respectively, for any positive integer partition $n = n_l + n_2$.
\end{result}

A nice example of a bi-transitive bitournament arises from arithmetic, see \cite{das}. For a nonempty subset $S$ of natural numbers, let $\Gamma_S(S,E)$ be the digraph with vertex set $S$ such that $uv\in E$  if $u<v$ and $u$ and $v$ have opposite parity. If $U$ consists of all even numbers in $S$ while $V$ consists of all odd numbers in $S$, then $\Gamma_S(S,E)$ is bi-transitive bitournament. The importance of this example is due to the following characterization; see \cite[Theorem 2.5]{das}.
\begin{result}
\label{dastheorem2.5} Let $\Gamma(V,E)$ be a bitournament. Then the following properties are equivalent.
\begin{itemize}
\item[(i)] $\Gamma(V,E)$ is bi-transitive.
\item[(ii)] $\Gamma(V,E)$ has no directed cycle.
\item[(iii)] $\Gamma((V,E)\cong \Gamma_S(S,E)$ for some nonempty subset $S$ of natural numbers.
\end{itemize}
\end{result}

\begin{result}
\label{dastheorem2.9} A bitournament $\Gamma(V,E)$ with $|V_1| = |V_2| = 2m$  and $|N(u)|=|N^-(u)|$ for all $u\in V$ is not bi-transitive.
\end{result}

Bi-transitive bitournaments are examples of 2-cBMGs where both {\bf{N1}} and {\bf{N3}} hold trivially. Also, $\Gamma$ is a strongly connected 2-cBMG if and only if any underlying oriented digraph of $\Gamma$ is a bitournament.

\section{Paths and circuits in bi-transitive digraphs}
\label{secpc}
In this section $\Delta=\Delta(V,E)$ denotes a bi-transitive bipartite digraph.
Our goal is to prove several results, see Corollary and Proposition \ref{prop11}, \ref{prop4}, and \ref{dec1}, which will be useful for the study of 2-cBMGs.
\begin{lemma}
\label{lem1}  Let $(v_1,v_2,u_2)$ be a triple of vertices of $\Delta$, where $v_1,v_2$ have the same color while both $v_1u_2$ and $u_2v_2$ are edges of $\Delta$.
Then
\begin{itemize}
\item[(i)] $N(v_2)\subseteq N(v_1)$,
\item[(ii)] $N^-(v_1)\subseteq N^-(v_2)$.
\end{itemize}
\end{lemma}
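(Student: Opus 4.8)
The plan is to obtain each of the two inclusions by a single invocation of the bi-transitivity axiom, after exhibiting a directed path of length three whose two endpoints are precisely the vertices to be compared. The key is to identify correctly the four vertices occurring in the axiom ``$u_1v_1,v_1u_2,u_2v_2\in E \Rightarrow u_1v_2\in E$'' with the vertices of that path; I will rename the axiom's vertices to avoid a clash with the names $v_1,v_2,u_2$ used in the lemma.

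For part (i) I would take an arbitrary $w\in N(v_2)$, so $v_2w\in E$. Together with the hypotheses $v_1u_2,u_2v_2\in E$ this yields the directed path $v_1\to u_2\to v_2\to w$ of length three. Applying bi-transitivity to this path — with the role of the initial vertex played by $v_1$, the two intermediate vertices being $u_2$ and $v_2$, and the terminal vertex being $w$ — gives $v_1w\in E$, i.e.\ $w\in N(v_1)$. Since $w$ was arbitrary, $N(v_2)\subseteq N(v_1)$. For part (ii) I would symmetrically take an arbitrary $w\in N^-(v_1)$, so $wv_1\in E$; combining this with $v_1u_2,u_2v_2\in E$ produces the length-three path $w\to v_1\to u_2\to v_2$, and bi-transitivity immediately yields $wv_2\in E$, that is $w\in N^-(v_2)$. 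Hence $N^-(v_1)\subseteq N^-(v_2)$.

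The only point needing a moment's care is the colour bookkeeping that makes these paths legitimate in the bipartite digraph $\Delta$: since $v_1u_2$ and $u_2v_2$ are edges, $v_1$ and $v_2$ automatically lie in the same colour class and $u_2$ in the other one — consistently with the stated hypothesis — so any out-neighbour of $v_2$ and any in-neighbour of $v_1$ lies in the colour class of $u_2$, and the four vertices in each application of bi-transitivity alternate colours as required; moreover $\Delta$ is loopless, so $v_1\neq u_2\neq v_2$ and the paths are genuine. There is essentially no obstacle here: the argument is two applications of the axiom. One should only note the degenerate case $v_1=v_2$, where both assertions hold trivially as equalities, so that the content of the lemma really concerns the case $v_1\neq v_2$.
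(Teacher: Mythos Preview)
Your proof is correct and follows essentially the same approach as the paper: both arguments reduce the lemma to bi-transitivity (which the paper phrases via the equivalent form {\bf N2}: $N(N(N(u)))\subseteq N(u)$). The only cosmetic difference is in part~(ii), where the paper bootstraps by applying part~(i) to the triple $(u_1,u_2,v_1)$ with $u_1\in N^-(v_1)$, whereas you apply bi-transitivity directly to the path $w\to v_1\to u_2\to v_2$; your route is slightly more streamlined but the underlying idea is identical.
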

\begin{proof} The case $v_1=v_2$ is trivial, therefore $v_1\neq v_2$ is assumed. Observe that $v_1u_2$ is an edge of $\Delta$ if and only if $u_2\in N(v_1)$. Therefore $N(u_2)\subseteq N(N(v_1))$. Also, $u_2v_2\in E$ means
 $v_2\in N(u_2)$. This gives
$N(v_2) \subseteq N(N(u_2))\subseteq N(N(N(v_1))).$ On the other hand $N(N(N(v_1))\subseteq N(v_1)$ by {\bf{N2}}. Therefore $N(v_2)\subseteq N(v_1)$ which is claim (i).

To show claim (ii), take any vertex $u_1$ from $N^-(v_1).$ Then $u_1v_1$ is and edge of $\Delta$. Since $v_1u_2$ is also an edge of $\Delta$, this yields that the triple $(u_1,u_2,v_1)$ satisfies the hypothesis of Lemma \ref{lem1}. From (i) applied to $(u_1,u_2,v_1)$ we have $N(u_2)\subseteq N(u_1)$. Since $v_2\in N(u_2)$, this yields $v_2\in N(u_1)$, that is, $u_1\in N^-(v_2)$ which proves claim (ii).
\end{proof}
\begin{lemma}
\label{lem2} Let $v_1$ and $v_2$ be distinct vertices of $\Delta$ with the same color. If there exist $u_1,u_2$ (not necessarily distinct) vertices in $\Delta$ such that the edge set $\Delta(E)$ of $\Delta$ has property
\begin{equation}
\label{eq2}
v_1u_2\in \Delta(E),\,v_2u_1\in \Delta(E),\,u_2v_2\in \Delta(E),\,u_1v_1\in \Delta(E),
\end{equation}
then $v_1$ and $v_2$ are equivalent vertices of $\Delta$.
\end{lemma}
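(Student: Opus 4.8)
The plan is to obtain the conclusion directly from Lemma \ref{lem1}, applied twice with the roles of $v_1$ and $v_2$ interchanged; no new ingredient is needed beyond property {\bf{N2}}, which is already built into Lemma \ref{lem1}.

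First I would apply Lemma \ref{lem1} to the triple $(v_1,v_2,u_2)$. By hypothesis $v_1$ and $v_2$ have the same color, and by \eqref{eq2} both $v_1u_2$ and $u_2v_2$ are edges of $\Delta$, so the hypothesis of Lemma \ref{lem1} is met. Part (i) then gives $N(v_2)\subseteq N(v_1)$ and part (ii) gives $N^-(v_1)\subseteq N^-(v_2)$.

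Next I would exploit the symmetry of \eqref{eq2} and apply Lemma \ref{lem1} to the triple $(v_2,v_1,u_1)$: again $v_2$ and $v_1$ share a color, and by \eqref{eq2} both $v_2u_1$ and $u_1v_1$ are edges of $\Delta$. Lemma \ref{lem1} now yields $N(v_1)\subseteq N(v_2)$ and $N^-(v_2)\subseteq N^-(v_1)$. Combining the two pairs of inclusions gives $N(v_1)=N(v_2)$ and $N^-(v_1)=N^-(v_2)$, which is exactly the assertion that $v_1$ and $v_2$ are equivalent vertices of $\Delta$.

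I do not expect any genuine obstacle here: the only thing to verify is that the hypotheses of Lemma \ref{lem1} (same color together with the two-edge configuration) are symmetric in the two vertices of the pair, which is immediate from \eqref{eq2}. In effect the statement simply repackages the asymmetric conclusion of Lemma \ref{lem1} as a symmetric one once the relevant edges are present in both directions.
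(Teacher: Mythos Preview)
Your argument is correct and matches the paper's proof essentially line for line: both apply Lemma~\ref{lem1} first to $(v_1,v_2,u_2)$ and then to $(v_2,v_1,u_1)$, and combine the resulting four inclusions. The paper additionally remarks that $u_1,u_2$ share a color, but this observation is not used, so your version is in fact marginally leaner.
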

\begin{proof}
Obviously, the color of $v_1$ is different from that of $u_1$ and $u_2$. Thus $u_1$ and $u_2$ have the same color.

From the first and the third inclusion in (\ref{eq2}), Lemma \ref{lem1} holds for $(v_1,v_2,u_2)$. Therefore, we have $N(v_2)\subseteq N(v_1)$ and $N^-(v_1)\subseteq N^-(v_2)$. Using the second and the forth
inclusions, Lemma \ref{lem1} holds for ($v_2$, $v_1$,$u_1)$. Therefore, $N(v_1)\subseteq N(v_2)$ and $N^-(v_2)\subseteq N^-(v_1)$. These four inclusions together yield $N(v_1)=N(v_2)$ and $N^-(v_1)= N^-(v_2)$ which proves Lemma \ref{lem2}.
\end{proof}
\begin{lemma}
\label{lem3} If $\Delta$ contains no two equivalent vertices, then $\Delta$ has no directed circuits of length four.
\end{lemma}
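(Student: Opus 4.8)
The plan is to argue by contradiction and to reduce everything at once to Lemma \ref{lem2}. Suppose $\Delta$ contains a directed circuit $C$ of length four, say $w_0\to w_1\to w_2\to w_3\to w_0$. By the definition of a directed circuit (a closed directed trail) the four edges $[w_0,w_1],[w_1,w_2],[w_2,w_3],[w_3,w_0]$ are pairwise distinct. Since $\Delta$ is bipartite, consecutive vertices of $C$ lie in different color classes, hence $w_0$ and $w_2$ have the same color and so do $w_1$ and $w_3$.

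First I would observe that at least one of the inequalities $w_0\neq w_2$, $w_1\neq w_3$ must hold: if both equalities held, then $[w_0,w_1]$ and $[w_2,w_3]$ would be the same edge, contradicting the fact that $C$ is a trail. Since the two color classes play symmetric roles, it is enough to treat the case $w_0\neq w_2$; in the remaining case one runs the same argument with the distinct same-colored pair $w_1,w_3$ in place of $w_0,w_2$, taking $u_1:=w_2$ and $u_2:=w_0$.

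Then I would apply Lemma \ref{lem2} to the distinct, same-colored vertices $v_1:=w_0$ and $v_2:=w_2$, with $u_1:=w_3$ and $u_2:=w_1$. The four conditions required in (\ref{eq2}), namely $v_1u_2,\ v_2u_1,\ u_2v_2,\ u_1v_1\in\Delta(E)$, read $[w_0,w_1],\ [w_2,w_3],\ [w_1,w_2],\ [w_3,w_0]\in E$, which are precisely the four edges of $C$. Hence Lemma \ref{lem2} gives that $w_0$ and $w_2$ are equivalent vertices of $\Delta$, contradicting the hypothesis that $\Delta$ has no two equivalent vertices; this contradiction proves Lemma \ref{lem3}.

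The only point that needs a little care — and it is the closest thing to an obstacle in an otherwise computation-free argument — is the degenerate possibility that the circuit reuses a vertex of the other color class, e.g.\ $w_1=w_3$ while $w_0\neq w_2$. This causes no trouble, because Lemma \ref{lem2} explicitly permits $u_1=u_2$, so the one-line application above goes through unchanged. In essence the proof is just the remark that a directed circuit of length four in a bipartite digraph is exactly the configuration (\ref{eq2}).
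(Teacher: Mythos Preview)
Your proof is correct and follows essentially the same route as the paper: both arguments set up a length-four circuit in the bipartite digraph and feed the four edges into Lemma~\ref{lem2} to produce a pair of equivalent vertices, contradicting the hypothesis. The only difference is in how the degenerate case of repeated vertices is dispatched: the paper invokes Lemma~\ref{lem4} (a forward reference) to force all four vertices to be distinct, whereas you observe directly that at least one same-colored pair must be distinct and that Lemma~\ref{lem2} explicitly allows $u_1=u_2$, which is arguably a cleaner and more self-contained treatment.
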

\begin{proof}
By way of a contradiction, let $v_1\rightarrow u_1\rightarrow v_2\rightarrow u_2\rightarrow v_1$ be a length $4$ directed circuit of $\Delta$. From Lemma \ref{lem4}, $v_1 \neq v_2$ and $u_1 \neq u_2$. Then $v_1$ and $v_2$, as well as $u_1$ and $u_2$, have the same color while $v_1$ and $u_1$ have different colors. Furthermore, $v_1u_1\in E)$, $u_1v_2\in E$, $v_2u_2\in E$, and $u_2v_1\in E$. Then (\ref{eq2}) holds whenever we switch $u_1$ and $u_2$, then the claim follows from Lemma \ref{lem2}.
\end{proof}

We show that Lemma \ref{lem3} is a particular case of a much stronger result.
\begin{proposition}
\label{prop11} If $\Delta$ contains no two equivalent vertices, then no directed circuit of $\Delta$ has length greater than $2$. In particular, every underlying directed graph of $\Delta$ is acyclic.
\end{proposition}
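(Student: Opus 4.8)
The plan is to argue by contradiction. Suppose $\Delta$ has a directed circuit of length greater than $2$; since $\Delta$ is bipartite every closed directed walk has even length, so this circuit has length $2m$ with $m\ge 2$, say $w_0\to w_1\to\cdots\to w_{2m-1}\to w_0$, with indices read modulo $2m$, $w_0,w_2,\dots$ in one colour class and $w_1,w_3,\dots$ in the other. I would reduce everything to producing, out of this circuit, an honest \emph{directed circuit of length four}: once we have one, Lemma \ref{lem3} contradicts the standing hypothesis that $\Delta$ has no two equivalent vertices, and we are done. The ``in particular'' clause then follows for free.

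The key step is to exploit bi-transitivity ({\bf N2}) as a shortcut device running along the circuit. Applying {\bf N2} to the three consecutive edges $w_3w_4,\,w_4w_5,\,w_5w_6$ gives $w_3w_6\in E$; feeding this newly found edge together with $w_6w_7,\,w_7w_8$ into {\bf N2} gives $w_3w_8\in E$; iterating, one obtains $w_3w_{3+(2j+1)}\in E$ for every $j\ge 0$ (the case $j=0$ being the circuit edge $w_3w_4$ itself). Taking $j=m-2\ge 0$ and reading the index modulo $2m$ yields $w_3w_0\in E$; for $m=2$ this is already an edge of the circuit, so $w_3w_0\in E$ holds for all $m\ge 2$.

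It then remains to check that $w_0\to w_1\to w_2\to w_3\to w_0$ really is a directed circuit, i.e. that the four edges $w_0w_1,\,w_1w_2,\,w_2w_3,\,w_3w_0$ are pairwise distinct. The first three are consecutive edges of the original trail, hence distinct; and since $w_3w_0$ runs from the colour class of $w_1,w_3$ into that of $w_0,w_2$, among the first three edges it can equal only $w_1w_2$, which runs in the same direction — and $w_3w_0=w_1w_2$ would force $w_3=w_1$, $w_0=w_2$, making the circuit's edges at positions $0$ and $2$ both equal to $w_0w_1$, contradicting that a directed circuit is a trail. Hence $w_0\to w_1\to w_2\to w_3\to w_0$ is a directed circuit of length four, Lemma \ref{lem3} applies, and we reach the desired contradiction; so no directed circuit of $\Delta$ has length greater than $2$. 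For the last assertion, take any underlying oriented digraph $\tilde\Delta$ of $\Delta$: its edge set is contained in $E$ and it is oriented and bipartite, so any directed cycle in $\tilde\Delta$ would have even length at least $4$ (a cycle in an oriented bipartite digraph cannot have length $\le 2$) and would be a directed circuit of $\Delta$ — impossible; hence $\tilde\Delta$ is acyclic.

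I expect the conceptual core — the observation that bi-transitivity collapses any long directed circuit down to one of length four — to be short; the only point needing care is the bookkeeping at the end, ensuring we extract a genuine length-four \emph{circuit} (pairwise distinct edges) rather than merely a closed walk of length four. As an alternative to invoking Lemma \ref{lem3}, one could instead apply Lemma \ref{lem2} to the edges $w_0w_1,\,w_2w_3,\,w_1w_2,\,w_3w_0$ (with $v_1=w_0$, $v_2=w_2$), concluding $w_0\,\dot{\sim}\,w_2$; but that route additionally requires excluding the degenerate possibility $w_0=w_2$, so routing through Lemma \ref{lem3} is cleaner.
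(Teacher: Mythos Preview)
Your argument is correct, but it follows a different route from the paper's. The paper does not reduce to the length-four case via Lemma~\ref{lem3}; instead it applies Lemma~\ref{lem1} directly to each consecutive same-colour pair along the circuit $v_1\to u_1\to v_2\to\cdots\to v_n\to u_n\to v_1$, obtaining the chain $N(v_n)\subseteq\cdots\subseteq N(v_1)$ which closes up (via the triple $(v_n,v_1,u_n)$) to give $N(v_1)=\cdots=N(v_n)$, and likewise for $N^-$. Thus \emph{all} the $v_i$ are pairwise equivalent, not just two of them. Your approach---iterating {\bf N2} to manufacture the shortcut edge $w_3w_0$ and then invoking Lemma~\ref{lem3}---is perfectly valid and arguably more modular, since it recycles the length-four case already proved. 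The paper's approach, on the other hand, is self-contained (it does not depend on Lemma~\ref{lem3}, which it in fact subsumes) and yields the stronger conclusion that every vertex in the circuit of a given colour is equivalent to every other; this stronger conclusion is exactly what is reused in the proof of Proposition~\ref{prop11a}. So your proof establishes the proposition, but the paper's route pays an extra dividend downstream.
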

\begin{proof} Since $\Delta$ is a bipartite graph, if a directed circuit exists, it has even length. Therefore, by way of a contradiction, let $v_1\rightarrow u_1\rightarrow v_2\rightarrow u_2\rightarrow \cdots \rightarrow v_i\rightarrow u_i\rightarrow \cdots\rightarrow  v_n\rightarrow u_n\rightarrow v_1$ be a directed circuit of $\Delta$ with length $2n\ge 4$. Then the vertices $v_i$ have the same color, as well as the vertices $u_i$, where the color of $v_i$ and $u_i$ are different. Take two consecutive vertices with the same color, say
$v_i$ and $v_{i+1}$. Then Lemma \ref{lem1} applies to the triple $(v_i,v_{i+1},u_i)$ showing that $N(v_{i+1})\subseteq N(v_i)$ and $N^-(v_i)\subseteq N^-(v_{i+1})$. Since this holds true for any $i$, we have
\begin{equation}
\label{eq5}
N(v_n)\subseteq N(v_{n-1})\subseteq \cdots \subseteq N(v_2)\subseteq N(v_1).
\end{equation}
Since Lemma \ref{lem1} also applies to the triple $(v_n,v_1,u_n)$ we also have $N(v_1)\subseteq N(v_n)$. This together with (\ref{eq5}) yields
$N(v_1)=N(v_2)=\cdots= N(v_n)$.
Similarly, we have
\begin{equation}
\label{eq6}
N^-(v_1)\subseteq N^-(v_2)\subseteq \cdots \subseteq N^-(v_{n-1})\subseteq N^-(v_n).
\end{equation}
Applying Lemma \ref{lem1} to the triple $(v_n,v_1,u_n)$ gives $N^-(v_n)\subseteq N(v_1)$ which together with (\ref{eq6}) yields
$N^-(v_1)=N^-(v_2)=\cdots= N^-(v_n)$.
Therefore, the vertices $v_i$ are all equivalent, contradicting our hypothesis.
\end{proof}
\begin{corollary}
\label{cor1}
Let $\Delta$ be a bi-transitive digraph.
\begin{itemize}
\item[\rm(i)] Any $\dot{\sim}$ consistent oriented digraph of $\Delta$ is acyclic.
\item[\rm(ii)] If $\Delta$ contains no two equivalent vertices, the only directed cycles of $\Delta$ have length $2$ and they are induced by symmetric edges.
\end{itemize}
\end{corollary}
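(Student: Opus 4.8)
The plan is to derive both parts of Corollary \ref{cor1} from Proposition \ref{prop11}, treating (ii) as an essentially immediate consequence and concentrating the work on (i).

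Part (ii) is close to a restatement. Assume $\Delta$ has no two equivalent vertices. By Proposition \ref{prop11} no directed circuit of $\Delta$ has length greater than $2$, and since $\Delta$ is bipartite every directed circuit has even length; as a directed cycle is in particular a directed circuit, $\Delta$ has no directed cycle of length $\ge 4$. A directed cycle of length $2$ is a trail $u\to v\to u$ whose two edges $uv$ and $vu$ are necessarily distinct, hence it is exactly a symmetric edge $uv,vu\in E$. Thus the only directed cycles of $\Delta$ are those induced by symmetric edges.

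For part (i) I would pass to the quotient $\Delta'$. First one checks that $\Delta'$ is again bi-transitive: if $u_1'v_1',\,v_1'u_2',\,u_2'v_2'\in E'$, then for any $u_1\in u_1'$ and $v_2\in v_2'$, choosing arbitrary $v_1\in v_1'$ and $u_2\in u_2'$ gives $u_1v_1,\,v_1u_2,\,u_2v_2\in E$ by the definition of $E'$, so bi-transitivity of $\Delta$ yields $u_1v_2\in E$; as $u_1$ and $v_2$ were arbitrary in their classes, $u_1'v_2'\in E'$. Since $\Delta'$ has no two equivalent vertices, Proposition \ref{prop11} applies to it, so every underlying directed graph of $\Delta'$ is acyclic. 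Now let $\tilde{\Delta}$ be a $\dot{\sim}$ consistent underlying oriented digraph of $\Delta$ with edge set $F$; by the construction recalled above for quotients, the subgraph $\Lambda$ of $\Delta'$ whose edges are the pairs $u'v'$ with $uv\in F$ for all $u\in u'$, $v\in v'$ is an underlying oriented digraph of $\Delta'$, and if $\tilde{\Delta}$ has a nontrivial directed cycle then so does $\Lambda$. Since $\Lambda$ is acyclic, $\tilde{\Delta}$ is acyclic, proving (i).

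An alternative, quotient-free argument for (i) would run as follows: suppose $\tilde{\Delta}$ had a directed cycle $v_1\to u_1\to v_2\to u_2\to\cdots\to v_n\to u_n\to v_1$, which must have even length $2n\ge 4$ because $\tilde{\Delta}$ is oriented and loopless. Reading this cycle inside $\Delta$ and repeating the argument of Proposition \ref{prop11} verbatim — it relies only on Lemma \ref{lem1}, and neither that lemma nor the inclusion chains \eqref{eq5} and \eqref{eq6} require the hypothesis that $\Delta$ has no two equivalent vertices — one obtains $N(v_1)=\cdots=N(v_n)$ and $N^-(v_1)=\cdots=N^-(v_n)$, so $v_1\dot{\sim}v_2$ with $v_1\ne v_2$. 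Then $v_1u_1\in F$ and $\dot{\sim}$ consistency of $\tilde{\Delta}$ force $v_2u_1\in F$, whereas $u_1v_2\in F$ lies on the cycle; this contradicts $\tilde{\Delta}$ being oriented. I expect the only delicate points to be this observation that Lemma \ref{lem1} and the chains of Proposition \ref{prop11} remain valid without the extra hypothesis, together with keeping the projection bookkeeping straight in the quotient approach; everything else is routine.
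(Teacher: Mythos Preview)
Your proof is correct and follows the approach implicit in the paper: part (ii) is immediate from Proposition~\ref{prop11}, and part (i) proceeds via the quotient $\Delta'$ using exactly the machinery set up in the background section (with your explicit check that bi-transitivity passes to the quotient filling in a detail the paper leaves to the reader or to the cited reference). Your alternative quotient-free argument for (i) is also valid and gives a pleasant self-contained route.
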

\begin{remark}
\label{rem10set}\em{
The hypothesis that $\Delta$ contains no two equivalent vertices in Corollary~\ref{cor1}(ii) cannot be dropped, as the following example shows. Let $$\Delta:=<6| [1,2],[1, 4], [1, 6], [2, 3], [2, 5],[3,4],[4,3], [3,6],[6,3],[4,5],[5,4],[5,6],[6,5]>$$ be the bipartite digraph with color classes $\{1,3,5\}$ and $\{2,4,6\}$.  $\Delta$ is bi-transitive (even a 2-cBMG), and $3\rightarrow 4 \rightarrow 5 \rightarrow 6 \rightarrow 3$ is a cycle of length $4$.}
\end{remark}
\begin{proposition}
\label{prop11a}
Let $\Delta$ be a bi-transitive digraph. If  $v_1\rightarrow u_1\rightarrow v_2\rightarrow u_2\rightarrow \cdots \rightarrow v_i\rightarrow u_i\rightarrow \cdots\rightarrow  v_n\rightarrow u_n\rightarrow v_1$ is a directed circle of length $2n$ with $n\ge 2$, then the sub-digraph of $\Delta$ with the vertex set $\{u_1,\ldots,u_n,v_1,\ldots,v_n\}$ is a complete bipartite digraph with color classes $\{u_1,\ldots,u_n\}$ and $\{v_1,\ldots,v_n\}$.
\end{proposition}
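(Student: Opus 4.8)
The plan is to recycle the neighbourhood--collapsing computation from the proof of Proposition~\ref{prop11}, noting that the part establishing equality of neighbourhoods along a circuit did not use the ``no equivalent vertices'' hypothesis. First I would read off from the given circuit $v_1\to u_1\to v_2\to u_2\to\cdots\to v_n\to u_n\to v_1$ that, for each $i$ (indices taken modulo $n$), the triple $(v_i,v_{i+1},u_i)$ meets the hypotheses of Lemma~\ref{lem1}: the vertices $v_i,v_{i+1}$ share a colour and $v_iu_i,\,u_iv_{i+1}$ are edges, whence $N(v_{i+1})\subseteq N(v_i)$ and $N^-(v_i)\subseteq N^-(v_{i+1})$. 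Chaining these inclusions around the circuit, including the wrap-around triple $(v_n,v_1,u_n)$, forces $N(v_1)=\cdots=N(v_n)$ and $N^-(v_1)=\cdots=N^-(v_n)$, exactly as in Proposition~\ref{prop11}. Applying the identical argument to the triples $(u_i,u_{i+1},v_{i+1})$ --- legitimate because $u_iv_{i+1},\,v_{i+1}u_{i+1}$ are edges and $u_i,u_{i+1}$ share a colour --- gives $N(u_1)=\cdots=N(u_n)$ and $N^-(u_1)=\cdots=N^-(u_n)$.

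Next I would convert these four equalities into edges. Since $v_1u_1\in E$, we have $u_1\in N(v_1)=N(v_i)$, so $v_iu_1\in E$ for every $i$; equivalently $v_i\in N^-(u_1)=N^-(u_j)$, so $v_iu_j\in E$ for all $i,j$. Symmetrically, $u_1v_2\in E$ yields $v_2\in N(u_1)=N(u_j)$, hence $u_jv_2\in E$ for every $j$; equivalently $u_j\in N^-(v_2)=N^-(v_i)$, so $u_jv_i\in E$ for all $i,j$. Thus every pair consisting of one $u_j$ and one $v_i$ is joined by edges in both directions, which is precisely the assertion that the sub-digraph induced on $\{u_1,\dots,u_n,v_1,\dots,v_n\}$ is the complete bipartite digraph with colour classes $\{u_1,\dots,u_n\}$ and $\{v_1,\dots,v_n\}$; if some of the $v_i$, or some of the $u_i$, coincide, the conclusion is unaffected, the colour classes simply being smaller sets.

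I do not expect a genuine obstacle: the proposition is essentially a corollary of the neighbourhood-collapsing phenomenon already harvested in Proposition~\ref{prop11}. The only points that require care are (a) invoking Lemma~\ref{lem1} with the three vertices in the order for which the inclusions compose correctly as one travels around the circuit, and (b) handling the wrap-around index so that the chain of inclusions closes up, yielding equalities rather than a proper chain. If a more compact write-up is preferred, one can instead quote directly from the proof of Proposition~\ref{prop11} that all the $v_i$ are mutually equivalent and all the $u_i$ are mutually equivalent; then the single edge $v_1\to u_1$ together with the $\dot{\sim}$-invariance of $N$ and $N^-$ immediately produces all the required arcs.
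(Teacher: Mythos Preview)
Your proof is correct and follows essentially the same approach as the paper: the paper simply quotes from the proof of Proposition~\ref{prop11} that all the $v_i$ are equivalent, rotates the circuit to conclude the same for the $u_i$, and then reads off all arcs from equivalence plus a single existing edge, exactly as in your final ``compact write-up'' remark. Your expanded version merely unpacks that quotation by re-running the Lemma~\ref{lem1} chain explicitly, so there is no substantive difference.
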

\begin{proof} From the proof of Proposition \ref{prop11}, the vertices $v_1,\ldots,v_n$ are equivalent. Since $u_1\rightarrow v_2\rightarrow u_2\rightarrow \cdots \rightarrow v_i\rightarrow u_i\rightarrow \cdots\rightarrow  v_n\rightarrow u_n\rightarrow v_1\rightarrow u_1$ is another directed circle, the vertices $u_1,\ldots,u_n$ are equivalent, as well. Since $u_1v_2,v_2u_2\in E$, this
implies $u_iv_j\in E$ and $v_iu_j\in E$ for every $1\le i,j\le n$, whence the claim follows.
\end{proof}
Example (\ref{geex}) shows that 2-cBMGs may contain symmetric edges. With the terminology of \cite{geiss3}, symmetric edges are the reciprocal best matches, and they model orthologous genes. Two genes belonging to two different species $A$ and $B$ are \emph{orthologous} if their last common ancestor was the speciation event that separated the lineages of $A$ and $B$. We cannot have genes $u_1,u_2$ from species $A$ and $v_1$ from species $B$ such that both $v_1,u_1$ and $v_1,u_2$ are couples of orthologous genes. On the other hand, the 2-cBMG given in Remark \ref{rem10set} contains vertices that are the endpoint of two symmetric edges, for example $3,4,5,6$.
Even though the definition of 2-cBMG does not exclude the existence of more symmetric edges with a common endpoint, using 2-cBMGs to improve algorithms for orthologous genes detection motivates the study of 2-cBMGs where no two symmetric edges have a common endpoint.

\begin{lemma}
\label{lem4} Assume that $\Delta$ contains no two equivalent vertices. Then for any vertex $u$ of $\Delta$ there exists at most one vertex $v$ of $\Delta$ such that both $uv$ and $vu$ are edges of $\Delta$.
\end{lemma}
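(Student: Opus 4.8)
The plan is to argue by contradiction. Suppose some vertex $u$ admits two \emph{distinct} vertices $v_1\neq v_2$ such that $uv_1,\,v_1u,\,uv_2,\,v_2u$ are all edges of $\Delta$; the goal is to conclude that $v_1$ and $v_2$ are equivalent vertices, which contradicts the standing hypothesis that $\Delta$ has no two equivalent vertices.

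First I would record the coloring data, since that is what makes the earlier lemmas applicable. As $\Delta$ is bipartite and $uv_1,uv_2\in E$, both $v_1$ and $v_2$ lie in the color class opposite to that of $u$; in particular $v_1$ and $v_2$ have the same color while $u$ has the other color. Thus the triples $(v_1,v_2,u)$ and $(v_2,v_1,u)$ are of exactly the shape required by Lemma \ref{lem1}, and the quadruple $(v_1,v_2,u,u)$ is of the shape required by Lemma \ref{lem2}.

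The core step is then a direct appeal to Lemma \ref{lem2} with the single auxiliary vertex $u_1=u_2=u$ (Lemma \ref{lem2} explicitly allows $u_1$ and $u_2$ to coincide). The four incidences in (\ref{eq2}) become $v_1u\in E$, $v_2u\in E$, $uv_2\in E$, $uv_1\in E$, all of which hold by assumption, so Lemma \ref{lem2} gives that $v_1$ and $v_2$ are equivalent. Equivalently, one can bypass Lemma \ref{lem2} and use Lemma \ref{lem1} twice: applied to $(v_1,v_2,u)$ via $v_1u,uv_2\in E$ it yields $N(v_2)\subseteq N(v_1)$ and $N^-(v_1)\subseteq N^-(v_2)$, and applied to $(v_2,v_1,u)$ via $v_2u,uv_1\in E$ it yields the reverse inclusions; combining the four inclusions gives $N(v_1)=N(v_2)$ and $N^-(v_1)=N^-(v_2)$, i.e. $v_1\dot{\sim}v_2$.

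Since $v_1\neq v_2$, this contradicts the assumption that $\Delta$ contains no two equivalent vertices, completing the proof. There is no real obstacle here: the statement is essentially a packaging of Lemma \ref{lem2}. The only point requiring a moment's care is the bookkeeping that legitimizes the degenerate choice $u_1=u_2=u$ and the observation that $uv_1,uv_2\in E$ forces $v_1$ and $v_2$ into a common color class, so that the hypotheses of Lemmas \ref{lem1} and \ref{lem2} are genuinely met.
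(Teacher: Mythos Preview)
Your proof is correct and follows essentially the same approach as the paper: assume two symmetric partners $v_1\neq v_2$ of $u$ and show they are equivalent, contradicting the hypothesis. The only cosmetic difference is that you invoke Lemmas~\ref{lem1}/\ref{lem2} (with the degenerate choice $u_1=u_2=u$), whereas the paper redoes the underlying {\bf N2} chase directly without citing those lemmas; your version is arguably cleaner.
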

\begin{proof}
By way of a contradiction, there are $u,v_1$ and $v_2$ vertices of $\Delta$ such that $uv_1, v_1u, uv_2, v_2u$ are edges of $\Delta$. Note that $v_1\neq v_2$ as $\Delta$ does not have multiple edges and that $u\neq v_1$ and $u\neq v_2$ as $\Delta$ does not have loops. Let $w\in N(v_1)$. Then $w\in N(N(u)) \subseteq N(N(N(v_2)))$. By {\bf{N2}}, $w\in N(v_2)$. Similarly, $w\in N(v_2)$ yields $w\in N(v_1)$. Let $w\in N^{-}(v_1)$, that is $v_1\in N(w)$. Then $v_2\in N(N(N(w)))\subseteq N(w)$ whence $w\in N^{-}(v_2)$. Therefore $v_1$ and $v_2$ have the same in- and out-neighbors, contradicting our hypothesis.
\end{proof}
As we have already observed, the 2-cBMG $\Delta_{10}$ given in (\ref{geex}) has two equivalent vertices. It may also be observed that each vertex in its color class $\{7,8,9,10\}$ is the endpoint of a symmetric edge. This shows that the converse of Lemma \ref{lem4} does not hold, that is, the constraint on a 2-cBMG to contain no two equivalent vertices is stronger than that of non-existence of symmetric edges with a common endpoint. However, the following result show that Proposition \ref{prop11} and the second claim in Corollary \ref{cor1} remain true for 2-cBMGs free from symmetric edges with a common endpoint.
\begin{lemma}
\label{lem4bis}
If $\Delta$ contains no symmetric edges with a common endpoint, then $\Delta$ has no directed circuits of length $\ge 4$.
\end{lemma}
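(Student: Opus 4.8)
The plan is to argue by contradiction along the same lines as the proof of Proposition~\ref{prop11}, but now replacing the ``no two equivalent vertices'' hypothesis by the weaker ``no symmetric edges with a common endpoint'' assumption, and showing that a short circuit forces a forbidden configuration of symmetric edges. So suppose $\Delta$ contains a directed circuit of length $2n\ge 4$; since $\Delta$ is bipartite, we may write it as $v_1\rightarrow u_1\rightarrow v_2\rightarrow u_2\rightarrow\cdots\rightarrow v_n\rightarrow u_n\rightarrow v_1$, where all $v_i$ share one colour and all $u_i$ the other. By Lemma~\ref{lem4} the non-existence of symmetric edges with a common endpoint gives us some structural control: each vertex is the endpoint of at most one symmetric edge, but we must first \emph{produce} symmetric edges from the circuit.

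The key observation is that the chain of inclusions in the proof of Proposition~\ref{prop11} still goes through verbatim: applying Lemma~\ref{lem1} to each consecutive same-colour triple $(v_i,v_{i+1},u_i)$ (indices mod $n$) yields $N(v_{i+1})\subseteq N(v_i)$ and $N^-(v_i)\subseteq N^-(v_{i+1})$ for all $i$, and wrapping around the circuit collapses these to equalities, so that $N(v_1)=\cdots=N(v_n)$ and $N^-(v_1)=\cdots=N^-(v_n)$; by the symmetric argument $N(u_1)=\cdots=N(u_n)$ and $N^-(u_1)=\cdots=N^-(u_n)$. In particular the $v_i$ are pairwise equivalent and the $u_i$ are pairwise equivalent, and by Proposition~\ref{prop11a} (or directly) the induced subgraph on $\{u_1,\dots,u_n,v_1,\dots,v_n\}$ is the complete bipartite digraph in both directions, so every pair $u_iv_j$ gives a symmetric edge. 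Now, since $n\ge 2$, the vertex $v_1$ is the common endpoint of the two symmetric edges $\{v_1,u_1\}$ and $\{v_1,u_2\}$ with $u_1\ne u_2$ (they are distinct by Lemma~\ref{lem4}, since if $u_1=u_2$ then $v_1\rightarrow u_1\rightarrow v_2\rightarrow u_1$ is impossible as $u_1v_1$ and $u_1v_2$ would force $v_1,v_2$ in $N(u_1)$ with $v_1\ne v_2$, which is fine, so actually one reduces the circuit length and reruns the argument). This contradicts the hypothesis that $\Delta$ has no two symmetric edges with a common endpoint.

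The main obstacle is the bookkeeping around degenerate circuits: a ``directed circuit'' in the sense fixed in Section~2 need only be a closed trail (edges distinct, vertices possibly repeated), so one cannot assume the $v_i$ are pairwise distinct or the $u_i$ pairwise distinct a priori. I would handle this by first reducing to the case where the circuit is a directed cycle (all intermediate vertices distinct): if some $v_i=v_j$ with $i<j$, the portion of the trail between the two occurrences is itself a shorter closed trail, and since its length is at least $2$ (no loops) one recurses; the recursion terminates at a genuine cycle, which still has length $\ge 4$ unless it has length $2$, and a length-$2$ cycle is a single symmetric edge — so we must rule out that the whole circuit degenerates to length $2$, but that contradicts the original length being $\ge 4$ together with the trail having distinct edges. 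Once we are at an honest cycle of length $2n\ge 4$, the $v_i$ and $u_i$ are automatically distinct within each colour class, the argument above produces the forbidden pair of symmetric edges at $v_1$, and we are done. As an immediate consequence, Proposition~\ref{prop11} and Corollary~\ref{cor1}(ii) hold for 2-cBMGs free from symmetric edges with a common endpoint, exactly as claimed in the surrounding text.
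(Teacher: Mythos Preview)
Your approach is the same as the paper's: invoke Proposition~\ref{prop11a} to see that the vertices of the circuit span a complete bipartite digraph (so every $u_iv_j$ is symmetric), and then exhibit two symmetric edges at a common vertex. The paper's proof is a one-liner that cites Proposition~\ref{prop11a} and observes that any vertex of the circuit is then the endpoint of $n\ge 2$ symmetric edges.

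There is one genuine wrinkle you noticed and that the paper glosses over: in a closed trail the $u_i$ (resp.\ $v_i$) need not be pairwise distinct, so ``$u_1\ne u_2$'' is not automatic. Your two attempts to handle this both stumble. First, Lemma~\ref{lem4} cannot be invoked here, since its hypothesis is ``no two equivalent vertices'', which is strictly stronger than the present one (the paper itself makes this point just before the lemma you are proving). Second, your reduction in the third paragraph does not close the gap: a closed trail of length $\ge 4$ with distinct edges can perfectly well contain only length-$2$ sub-cycles (e.g.\ $v\to u_1\to v\to u_2\to v$), so the sentence ``that contradicts the original length being $\ge 4$ together with the trail having distinct edges'' is simply false. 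The clean fix is direct and avoids any recursion: the trail has at least four pairwise distinct edges, each joining a vertex in $\{v_i\}$ to one in $\{u_j\}$, so $\{v_i\}$ and $\{u_j\}$ cannot both be singletons; if, say, $u_a\ne u_b$, then by Proposition~\ref{prop11a} both $v_1u_a$ and $v_1u_b$ are symmetric edges meeting at $v_1$, which is the desired contradiction.
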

\begin{proof}
From Proposition \ref{prop11a}, if $\Delta$ has a directed circuit of length $2n$ with $n\ge 2$, then it contains a sub-digraph $\Sigma$ isomorphic to a complete bipartite digraph on $2n$ vertices. This yields that a vertex of $\Delta'$ is the endpoint of $n$ symmetric edges of $\Sigma$, and hence at least $n$ symmetric edges of $\Delta$ contradicting our hypothesis.    \end{proof}
\begin{remark}
\em{Lemma \ref{lem4bis} shows that (ii) of Corollary \ref{cor1} still holds if the hypothesis of non-existence of  two equivalent vertices is replaced by the less stronger ``If no two symmetric edges of $\Delta$  have a common endpoint''.}
\end{remark}
The following lemma resembles \cite[Lemma 8]{geiss} for $\Delta$.
\begin{lemma}
\label{lem24} Assume that  $\Delta$ is sink-free. For any two vertices $u$ and $v$ of $\Delta$, if $N(u)\cap N(v)=\emptyset$, then $N(N(u))\cap N(N(v))=\emptyset$.
\end{lemma}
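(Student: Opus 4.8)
\emph{Proof plan.} The plan is to argue by contradiction, chaining bi-transitivity with the sink-free hypothesis. Suppose, to the contrary, that there is a vertex $t$ lying in both $N(N(u))$ and $N(N(v))$. Unpacking the definitions, there are vertices $a\in N(u)$ and $b\in N(v)$ with $t\in N(a)\cap N(b)$; equivalently, $\Delta$ contains the edges $ua$, $at$, $vb$, $bt$. (Since $N(N(\cdot))$ preserves the color class of its argument, $t$ then lies in the color class of both $u$ and $v$, so $u$ and $v$ have the same color; we shall not need this observation explicitly.)

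Next I would invoke that $\Delta$ is sink-free (property {\bf{N4}}): the vertex $t$ has at least one out-neighbor $s$, so $ts\in E$. Now the walk $u\rightarrow a\rightarrow t\rightarrow s$ consists of the three edges $ua$, $at$, $ts$, and applying bi-transitivity (property {\bf{N2}}) with $u_1=u$, $v_1=a$, $u_2=t$, $v_2=s$ yields $us\in E$, i.e. $s\in N(u)$. Applying the same reasoning to the walk $v\rightarrow b\rightarrow t\rightarrow s$, with $u_1=v$, $v_1=b$, $u_2=t$, $v_2=s$, yields $vs\in E$, i.e. $s\in N(v)$. Hence $s\in N(u)\cap N(v)$, contradicting the hypothesis $N(u)\cap N(v)=\emptyset$. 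This contradiction shows $N(N(u))\cap N(N(v))=\emptyset$.

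There is essentially no obstacle here: the argument is just a twofold application of bi-transitivity, and the only role of the sink-free hypothesis is to produce the ``exit edge'' $ts$ that turns the two length-two walks $u\rightarrow a\rightarrow t$ and $v\rightarrow b\rightarrow t$ into length-three walks to which {\bf{N2}} applies; without it the conclusion can genuinely fail (when $t$ is a sink), so the hypothesis is essential. The only thing to double-check is that the degenerate coincidences permitted in $\Delta$ — $t$ equal to $u$ or $v$, $a$ equal to $b$, or $s$ equal to $a$ or $b$, each realised through a symmetric edge — do not invalidate the two uses of bi-transitivity, which indeed they do not, since the required edges $ua,at,ts$ (resp. $vb,bt,ts$) are present regardless.
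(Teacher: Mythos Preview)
Your argument is correct and is essentially the same as the paper's: both proceed by contradiction, pick $t\in N(N(u))\cap N(N(v))$, use sink-freeness to obtain an out-neighbor $s$ of $t$, and then apply {\bf N2} to conclude $s\in N(u)\cap N(v)$. The only cosmetic difference is that the paper phrases the key step at the set level, writing $N(t)\subseteq N(N(N(u)))\subseteq N(u)$ (and likewise for $v$), whereas you unpack the same inclusion edge-by-edge via the bi-transitivity formulation.
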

\begin{proof}
By way of a contradiction, there exists $t\in N(N(u))\cap N(N(v))$. By {\bf{N2}} we have  $N(t)\subseteq N(N(N(u)))\subseteq N(u)$. Applying {\bf{N2}} on $v$ and $t$, we also have $N(t) \subseteq N(v)$. Thus, $N(u) \cap N(v)$ is not the empty if $N(t)\neq \emptyset$. Since we assumed that all vertices in $\Delta$ have at least one out-neighbor, $N(u) \cap N(v) \neq \emptyset$, contradicting the hypothesis.
\end{proof}
 \begin{lemma}
\label{lem21}
 For any three vertices $u,v$ and $w$ of $\Delta$ with the same color, if $u$ and $v$ have no common out-neighbors, then either $u$ or $v$ is not strongly connected to $w$.
\end{lemma}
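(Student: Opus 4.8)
The plan is to prove the contrapositive: assuming that both $u$ and $v$ are strongly connected to $w$, I will show that $N(u)\cap N(v)\neq\emptyset$. By hypothesis there is a directed walk from $u$ to $w$; since $\Delta$ is bipartite and $u,w$ have the same colour this walk has even length, and it uses at least one edge, so its length is $2k$ for some $k\ge 1$, say $u=z_0\to z_1\to\cdots\to z_{2k}=w$. Likewise there is such a walk from $v$ to $w$.

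First I would shorten the walk to length two. Bi-transitivity ({\bf{N2}}) asserts that whenever $z_i\to z_{i+1}\to z_{i+2}\to z_{i+3}$ is a directed walk one has $z_iz_{i+3}\in E$; applying this to the first three edges of the walk replaces it by one of length $2k-2$ with the same endpoints, and repeating we arrive at a walk $u\to x\to w$. Now Lemma~\ref{lem1}(i), applied to the triple $(u,w,x)$ — legitimate since $u$ and $w$ share a colour and $ux,xw\in E$ — gives $N(w)\subseteq N(u)$. The same argument applied to a walk from $v$ to $w$ gives $N(w)\subseteq N(v)$. Hence $N(w)\subseteq N(u)\cap N(v)$; since $\Delta$ is sink-free, $N(w)\neq\emptyset$, so $N(u)\cap N(v)\neq\emptyset$, contradicting the hypothesis.

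The argument is short, and I expect the only subtleties to be of a bookkeeping nature. The main one is the direction of the reachability: what is used is that $w$ is reachable \emph{from} $u$ and \emph{from} $v$, so that {\bf{N2}} and Lemma~\ref{lem1} apply with $w$ playing the role of the ``second'' vertex of a path; the conclusion genuinely fails if one only knows that $u$ and $v$ are reachable from $w$. The other point is that sink-freeness — not listed in the statement but needed, exactly as in the neighbouring Lemma~\ref{lem24} — is used only at the last line, to turn the inclusion $N(w)\subseteq N(u)\cap N(v)$ into the required non-emptiness, and it cannot be dropped. Neither {\bf{N1}} nor {\bf{N3}} enters the proof.
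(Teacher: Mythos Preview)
Your proof is correct and follows essentially the same route as the paper's: both reduce the walks to length~$2$ by bi-transitivity and then exploit the resulting inclusion $N(w)\subseteq N(u)\cap N(v)$ together with sink-freeness. The only packaging difference is that the paper records $w\in N(N(u))\cap N(N(v))$ and then invokes Lemma~\ref{lem24}, whereas you invoke Lemma~\ref{lem1}(i) and the sink-free hypothesis directly; unwinding Lemma~\ref{lem24} one sees these are the same argument. (Your remark that {\bf{N1}} is not needed is accurate; the paper's phrase ``From {\bf{N1}}'' in this proof is a slip --- the shortening of the walk uses only bi-transitivity.) One tiny bookkeeping point: your claim that the walk has length $2k$ with $k\ge 1$ tacitly assumes $u\neq w$; if $u=w$ the inclusion $N(w)\subseteq N(u)$ is trivial, so nothing is lost.
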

\begin{proof} By way of a contradiction, we have two directed walks $u=u_0\rightarrow u_1 \rightarrow u_2 \rightarrow \cdots\rightarrow u_m \rightarrow u_{m+1}=w $ and $u=v_0\rightarrow v_1 \rightarrow v_2 \rightarrow \cdots\rightarrow v_n \rightarrow v_{n+1}=w$. In particular, $u_m$ is reachable from $u$. From {\bf{N1}}, $u_m\in N(u)\cup N(N(u))$. Therefore,
either $u_m\in N(u)$ or $u_m\in N(N(u))$.  Since $u$ and $w$ have the same color different from the one of $u_m$, the case $u_m\in N(N(u))$ cannot actually occur. Thus $u_m\in N(u)$. As $u_m w$ is an edge of $\Delta$, it turns out that $w\in N(N(u))$. The same argument applies to $v$, therefore $w\in N(N(u))\cap N(N(v))$. But this contradicts Lemma \ref{lem24} as $u$ and $v$ have no common out-neighbors.
\end{proof}
\begin{lemma}
\label{lem22}  Assume that $\Delta$ sink-free. For any three vertices $u,v$ and $w$ of $\Delta$, if $u$ and $v$ have the same color but have no common out-neighbors, then either $u$ or $v$ is not strongly connected to $w$.
\end{lemma}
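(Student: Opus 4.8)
The plan is to reduce Lemma~\ref{lem22} to the already-proved Lemma~\ref{lem21}, in which the third vertex was required to have the same color as $u$ and $v$. Since $\Delta$ is bipartite and $2$-colored, $w$ either lies in the color class of $u$ and $v$, or in the opposite one. In the first case the statement is verbatim Lemma~\ref{lem21} and there is nothing to do, so the only genuinely new situation is the one where $w$ has the color opposite to that of $u$ and $v$.

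To handle that case I would argue by contradiction: suppose that both $u$ and $v$ are strongly connected to $w$. Since $\Delta$ is sink-free, $N(w)\neq\emptyset$; fix any $w'\in N(w)$. Because $w$ and $w'$ have different colors, $w'$ lies in the same color class as $u$ and $v$. Now take a directed walk witnessing that $u$ is strongly connected to $w$ and append the edge $ww'$ at its end; this is again a directed walk, so $u$ is strongly connected to $w'$, and in the same way $v$ is strongly connected to $w'$. But then $u$, $v$, $w'$ all have the same color and $u,v$ have no common out-neighbour, so Lemma~\ref{lem21} forces one of $u$, $v$ not to be strongly connected to $w'$, a contradiction. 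Hence at least one of $u$, $v$ is not strongly connected to $w$.

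I do not expect a serious obstacle here; the argument is essentially a color bookkeeping step. The only points that need a word of care are that prolonging a directed walk by one edge again produces a directed walk (so that strong connectivity to $w$ upgrades to strong connectivity to $w'$), and that Lemma~\ref{lem21} is stated for arbitrary triples of equal-colored vertices without any distinctness assumption, so it may be applied even if $w'$ happens to coincide with $u$ or with $v$. (One may also note that $u\neq v$ automatically, since $N(u)\cap N(v)=\emptyset$ together with sink-freeness rules out $u=v$, but this is not actually needed.)
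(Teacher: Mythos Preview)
Your proof is correct. It differs from the paper's argument in the opposite-color case: instead of extending the walks by one edge and invoking Lemma~\ref{lem21} again, the paper applies bi-transitivity directly to each of the two walks to conclude $w\in N(u)$ and $w\in N(v)$ (this is the content of Lemma~\ref{lem12ag}: a directed walk between vertices of different color collapses to a single edge), so that $w$ itself is a common out-neighbour, contradicting the hypothesis. Your reduction is a bit more modular---it treats Lemma~\ref{lem21} as a black box and uses sink-freeness to manufacture a same-colored target $w'$---while the paper's version avoids the extra step but repeats the ``collapse the walk'' argument already hidden in Lemma~\ref{lem21}. Either way the substance is the same.
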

\begin{proof} By Lemma \ref{lem21}, we may assume that $w$ does not have the same color of $u$ and $v$. Using the same argument in the proof of Lemma \ref{lem21}, $u_m\in N(u)\cup N(N(u))$. Here $u,w$ and $u_m,w$ are pairs of vertices of different colors, hence $u_m$ and $u$ have the same color and the case $u_m\in N(N(u))$ must occur. Therefore {\bf{N2}} gives $w\in N(N(N((u)))\subseteq N(u)$. The same argument applies to $v$, then $w\in N(v)\cap N(u)$. Thus $w$ is an out-neighbor of both $u$ and $v$, contradicting one the hypotheses.
\end{proof}

\begin{lemma}
\label{lem33}  Let $u$ and $v$ be any two vertices of $\Delta$. If $u$ is out-dominated by $v$, and $u$ is strongly connected to a vertex $w$ of $\Delta$, then $v$ is also strongly connected to $w$.
\end{lemma}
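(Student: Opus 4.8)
The plan is to prove the statement directly from the definition of strong connectivity, using the hypothesis ``$u$ is out-dominated by $v$'', i.e.\ $N(u)\subseteq N(v)$, only to reroute the very first step of a connecting walk. In particular none of \textbf{N1}, \textbf{N2}, \textbf{N3} is needed, and the proof does not even require $\Delta$ to be sink-free: the assumption that $u$ is strongly connected to $w$ already forces $N(u)\neq\emptyset$.

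First I would fix a directed walk witnessing that $u$ is strongly connected to $w$, say $u=u_0\rightarrow u_1\rightarrow\cdots\rightarrow u_m\rightarrow u_{m+1}=w$. By the definition used here such a walk has length $m+1\ge 1$, so the vertex $u_1$ is well defined and $u_0u_1=uu_1$ is an edge of $\Delta$; equivalently $u_1\in N(u)$. Now the hypothesis $N(u)\subseteq N(v)$ gives $u_1\in N(v)$, i.e.\ $vu_1\in E$. Replacing the initial edge $uu_1$ by $vu_1$ and keeping the remaining portion $u_1\rightarrow u_2\rightarrow\cdots\rightarrow u_{m+1}=w$ unchanged yields a directed walk $v\rightarrow u_1\rightarrow u_2\rightarrow\cdots\rightarrow u_{m+1}=w$, which shows that $v$ is strongly connected to $w$.

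I do not expect any real obstacle; the only matters needing a word of care are the degenerate cases. If $v=u$ there is nothing to prove. If the witnessing walk has length exactly $1$ — for instance when $w\in N(u)$, or when $w=u$ via a directed circuit through $u$ — the argument still applies verbatim, because the definition still produces a vertex $u_1\in N(u)$, possibly with $u_1=w$. Finally, since $\Delta$ is bipartite the rerouted walk automatically alternates between the two colour classes (indeed $v$ inherits the colour of $u$ from $vu_1\in E$ and $uu_1\in E$), so no further bookkeeping is required. This lemma will then be combined with Lemma~\ref{lem1} and the preceding results to control how strong connectivity propagates along the out-domination order, which is the tool we need for the structural analysis of 2-cBMGs.
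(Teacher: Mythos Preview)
Your proof is correct and follows essentially the same approach as the paper: fix a directed walk from $u$ to $w$, use $N(u)\subseteq N(v)$ to replace the first step $u\to u_1$ by $v\to u_1$, and keep the rest of the walk. The paper's argument is terser and omits the discussion of degenerate cases, but the idea is identical.
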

\begin{proof} Let $u=u_0\rightarrow u_1 \rightarrow u_2 \rightarrow \cdots\rightarrow u_m \rightarrow u_{m+1}=w$ be a directed path. Then $u_1\in N(u)$. Furthermore, $u_1\in N(v)$ since $u$ is dominated by $v$. Therefore $v=v_0\rightarrow u_1 \rightarrow u_2 \rightarrow \cdots\rightarrow u_m \rightarrow u_{m+1}=w$ is a directed path form $v$ to $w$.
\end{proof}

The following lemma follows by repeatedly use of bi-transitivity.
\begin{lemma}
\label{lem12ag}
Let $u,v\in V$ be two distinct vertices. Then $u$ is strongly connected to $v$ if and only if
either $uv\in E$ (and $u,v$ have different colors), or there exists $w\in V$ such that
$uw,wv\in E$ (and $u,v$ have the same color).
\end{lemma}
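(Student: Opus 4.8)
The plan is to prove both implications by unwinding the definition of "strongly connected" and applying bi-transitivity as a length-reduction tool. The reverse implication is immediate: if $uv\in E$ then the walk $u\to v$ already witnesses strong connectivity, and if $uw,wv\in E$ then $u\to w\to v$ does. So the content is in the forward direction. Suppose $u=u_0\to u_1\to\cdots\to u_m\to u_{m+1}=v$ is a directed walk in $\Delta$. Since $\Delta$ is bipartite, $u$ and $v$ have the same colour precisely when the walk has even length, and different colours precisely when it has odd length; I would record this parity observation first, since it tells me which of the two conclusions to aim for.

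The core step is an induction on the length $\ell=m+1$ of the walk, showing that any directed walk from $u$ to $v$ of length $\ell\ge 3$ can be replaced by a directed walk from $u$ to $v$ of length $\ell-2$. Indeed, given a walk $u=u_0\to u_1\to u_2\to u_3\to\cdots\to v$ with at least three edges, apply bi-transitivity (property {\bf N2}) to the configuration $u_0u_1,\,u_1u_2,\,u_2u_3\in E$: since $u_0,u_1,u_2,u_3$ have the appropriate alternating colours, bi-transitivity yields $u_0u_3\in E$. Replacing the initial segment $u_0\to u_1\to u_2\to u_3$ by the single edge $u_0\to u_3$ produces a directed walk from $u$ to $v$ of length $\ell-2$. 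Iterating, any walk of odd length collapses to a walk of length $1$, i.e. an edge $uv\in E$ (and then $u,v$ have different colours); any walk of even length $\ge 2$ collapses to a walk of length $2$, i.e. there is $w$ with $uw,wv\in E$ (and then $u,v$ have the same colour). Length-$0$ walks are excluded because $u\ne v$, and length-$1$ and length-$2$ walks are already in the desired form, which handles the base cases.

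I do not anticipate a serious obstacle here, since the statement is essentially a direct consequence of {\bf N2}; the one point requiring a little care is bookkeeping the colours so that the three-edge segment $u_0u_1,u_1u_2,u_2u_3$ genuinely satisfies the hypotheses of bi-transitivity (equivalently, that $u_0$ and $u_2$ have the same colour while $u_1,u_3$ share the other colour), which follows automatically from $\Delta$ being bipartite. A secondary subtlety is that the shortened object must still be a \emph{walk} (repeated vertices allowed), not necessarily a trail or path; since the lemma only asserts strong connectivity, which is defined via walks, this causes no difficulty, and the final length-$1$ or length-$2$ object is automatically simple because it has at most three vertices and $u\ne v$.
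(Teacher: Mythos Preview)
Your proposal is correct and matches the paper's approach exactly: the paper states only that the lemma ``follows by repeatedly use of bi-transitivity,'' and your argument is precisely the natural unpacking of that sentence---collapsing a walk two steps at a time via {\bf N2} until length $1$ or $2$ remains. There is nothing to add.
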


Now we show some more features of bi-transitive digraphs which also satisfies one of {\bf{N1}} ad {\bf{N3}}. First we prove a consequence of {\bf{N1}}.
\begin{lemma}
\label{lemA7ag} Assume that {\bf{N1}} holds. Let $u,w\in V_1$ and $v,z \in V_2$ be four pairwise distinct vertices such that $wz$ is a symmetric edge. If $|N(u)|=|N(v)|=1$ and
$uz\in E$, then $vw\not\in E$.
\end{lemma}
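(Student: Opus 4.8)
The plan is to argue by contradiction. Assume $vw\in E$; I will then exhibit, for the pair $u,v$, exactly the configuration that {\bf{N1}} forbids.

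The first step is to pin down the two singleton out-neighbourhoods. Since $|N(u)|=1$ and $z\in N(u)$ (because $uz\in E$), we must have $N(u)=\{z\}$; similarly, from $|N(v)|=1$ and the contradiction hypothesis $vw\in E$ we get $N(v)=\{w\}$. Next I would check that $u$ and $v$ are independent: although they lie in different colour classes, $v\notin N(u)$ because $N(u)=\{z\}$ and $v\neq z$, and $u\notin N(v)$ because $N(v)=\{w\}$ and $u\neq w$ --- here the pairwise distinctness of the four vertices together with the assumptions $|N(u)|=|N(v)|=1$ is precisely what is used. As a further step I would record that, $wz$ being a symmetric edge, $zw\in E$, so $w\in N(z)$.

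The final step combines these. Since $u,v$ are independent, {\bf{N1}} (in the restated form: there are no vertices $t,x$ with $ut,vx,tx\in E$) applies to them; but $t:=z$ and $x:=w$ do satisfy $uz\in E$, $vw\in E$, and $zw\in E$, a contradiction. Equivalently, in the set-theoretic formulation, {\bf{N1}} gives $N(v)\cap N(N(u))=\emptyset$, whereas $w\in N(v)$ and, via $z\in N(u)$ together with $zw\in E$, also $w\in N(N(u))$, so $w$ lies in the intersection. Either way we conclude $vw\notin E$.

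I do not expect a genuine obstacle here; the one point that needs care is verifying that $u$ and $v$ form an admissible pair for {\bf{N1}}, i.e. that they are independent, and this is exactly where both degree-one hypotheses and the pairwise distinctness of $u,v,w,z$ are needed. It is also worth noting that only {\bf{N1}} is invoked --- bi-transitivity plays no role in this particular statement.
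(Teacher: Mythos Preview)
Your proof is correct and is essentially the same argument as the paper's: assume $vw\in E$, deduce $N(u)=\{z\}$ and $N(v)=\{w\}$, verify that $u$ and $v$ are independent, and then obtain the contradiction $w\in N(v)\cap N(N(u))$ from the symmetric edge $zw\in E$, against {\bf N1}. The only difference is cosmetic: you spell out the independence check and give both formulations of {\bf N1}, whereas the paper proceeds directly in the set-theoretic form.
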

\begin{proof} Clearly, $N(u)=\{z\}$ and hence $N(N(u))=N(z)$. Therefore, $v\cap N(u)=\emptyset$ and $w\in N(N(u))$. Now, assume on the contrary that $vw \in E$. Then $N(v)=\{w\}$ and $u\cap N(v)=\emptyset$. Furthermore,
$w\in N(N(u))\cap N(v)$. But this contradicts {\bf{N1}}.
\end{proof}
\begin{lemma}
\label{lem23} Let $\Delta$ be a bi-transitive graphs which also satisfies {\bf{N1}}. Assume  that $\Delta$ is sink-free.  For any three vertices $u,v$ and $w$  of $\Delta$, if $u$ and $v$ have different colors but they are independent, then either $u$ or $v$ is not strongly connected to $w$.
\end{lemma}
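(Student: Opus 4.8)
The plan is to argue by contradiction: assume that $w$ is strongly connected to both $u$ and $v$, and extract from this a configuration forbidden by {\bf{N1}}. The mechanism is the same one already used for Lemmas~\ref{lem21} and~\ref{lem22}: bi-transitivity collapses any reachability walk to length at most two, and then {\bf{N1}} bites; the only new feature is that now $u$ and $v$ have \emph{different} colours, which is precisely the independent configuration {\bf{N1}} is designed to rule out.

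First I would dispose of the degenerate cases $w=v$ and $w=u$. If $w=v$, then the assumption ``$u$ strongly connected to $w$'' says $u$ is strongly connected to $v$; since $u$ and $v$ are distinct and of different colours, Lemma~\ref{lem12ag} forces $uv\in E$, contradicting the independence of $u$ and $v$. The case $w=u$ is symmetric (it forces $vu\in E$). Hence we may assume $w\notin\{u,v\}$. Since $u$ and $v$ have different colours, $w$ then shares its colour with exactly one of them; up to interchanging the roles of $u$ and $v$ — and using the symmetric form of {\bf{N1}}, namely that for independent $u,v$ both $N(v)\cap N(N(u))$ and $N(u)\cap N(N(v))$ are empty — I may assume $w$ has the same colour as $u$, hence a colour different from $v$.

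Now I would apply Lemma~\ref{lem12ag} to the two strong connections separately. Since $u$ and $w$ are distinct of the same colour, there is a vertex $p$ with $up,pw\in E$; since $v$ and $w$ are distinct of different colours, $vw\in E$. Thus $up,pw,vw\in E$, which means $p\in N(u)$ and $w\in N(p)\cap N(v)\subseteq N(N(u))\cap N(v)$, so $w\in N(N(u))\cap N(v)$. This contradicts {\bf{N1}} applied to the independent pair $u,v$. (In the alternative subcase, where $w$ shares its colour with $v$, the same computation yields $w\in N(N(v))\cap N(u)$, contradicting the other half of {\bf{N1}}.) Therefore $w$ cannot be strongly connected to both $u$ and $v$, which is the claim.

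I do not expect a serious obstacle: the argument is short once Lemma~\ref{lem12ag} is available. The one point that needs care is the colour bookkeeping — verifying that $w$ genuinely lands in $N(N(u))$ rather than merely in $N(u)$ when $w$ has $u$'s colour, which is guaranteed because the intermediate vertex $p$ supplied by Lemma~\ref{lem12ag} necessarily has the opposite colour — and, relatedly, making sure the ``without loss of generality'' step is justified by the symmetry of {\bf{N1}} in $u$ and $v$.
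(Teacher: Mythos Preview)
Your argument is correct and follows essentially the same route as the paper's: assume both $u$ and $v$ reach $w$, use bi-transitivity to collapse each walk to length at most two (the paper phrases this as reusing the arguments of Lemmas~\ref{lem21} and~\ref{lem22}, you invoke Lemma~\ref{lem12ag} directly), and obtain $w\in N(v)\cap N(N(u))$ (or the symmetric version), contradicting {\bf{N1}}. Your explicit treatment of the degenerate cases $w\in\{u,v\}$ is a bit more careful than the paper's, which silently folds them into the ``without loss of generality'' step.
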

\begin{proof}
By way of a contradiction, both $u$ and $v$ are assumed to be strongly connected to $w$. W.l.g. we may suppose that $w$ and $v$ have the same color different from the one of $u$. The argument in the proof of Lemma \ref{lem22} applied to $u,w$ shows that $w\in N(u)$, while the argument in the proof of Lemma \ref{lem21} applied to $v,w$ gives $w\in N(N(v))$. Therefore $N(u)\cap N(N(v))\neq\emptyset$. Then $u$ and $v$ are not independent by {\bf{N1}}, contradicting one of the hypotheses.
\end{proof}
Lemma \ref{lem23} has the following consequence.
\begin{proposition}
\label{prop4} Let $\Delta$ be a bi-transitive digraph which also satisfies both {\bf{N1}} and  {\bf{N4}}. Let $u$ and $v$ be two independent vertices of $\Delta$ with no common out-neighbor. Then, for any vertex $w$, either $u$ or $v$ is not strongly connected to $w$.
\end{proposition}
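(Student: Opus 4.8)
The plan is to reduce Proposition~\ref{prop4} to the previously established strong-connectivity obstructions by a case analysis on the colors of $u$ and $v$. Since $u$ and $v$ are independent vertices, they are either of the same color or of different colors, and both of these situations have already been handled: Lemma~\ref{lem22} covers same-colored vertices with no common out-neighbor (using {\bf{N4}}, i.e. sink-freeness), and Lemma~\ref{lem23} covers vertices of different colors that are independent (using {\bf{N1}} together with {\bf{N4}}).

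First I would observe that the hypothesis ``independent'' gives $uv\notin E$ and $vu\notin E$, which together with ``no common out-neighbor'' is exactly what is needed to invoke one of the two lemmas. If $u$ and $v$ have the same color, then by hypothesis they have no common out-neighbor, so Lemma~\ref{lem22} applies directly (note $\Delta$ is sink-free by {\bf{N4}}) and yields that for any vertex $w$, either $u$ or $v$ is not strongly connected to $w$. If instead $u$ and $v$ have different colors, then since they are independent, Lemma~\ref{lem23} applies (here both {\bf{N1}} and sink-freeness are used) and gives the same conclusion. In either case the desired statement follows, and these two cases are exhaustive.

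I do not expect any genuine obstacle here: the proposition is essentially a packaging of Lemmas~\ref{lem22} and~\ref{lem23} under a common hypothesis, and the only thing to check is that the hypotheses of the appropriate lemma are met in each color case. The one minor point worth stating explicitly is why, in the different-color case, the ``no common out-neighbor'' assumption is not needed---it is simply not part of the hypothesis of Lemma~\ref{lem23}, which only requires independence. So the write-up is short: split on colors, cite Lemma~\ref{lem22} in the monochromatic case and Lemma~\ref{lem23} in the bichromatic case, and conclude.

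\begin{proof}
Since $u$ and $v$ are independent, $uv\notin E$ and $vu\notin E$. By {\bf{N4}}, $\Delta$ is sink-free. We distinguish two cases according to the colors of $u$ and $v$.

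If $u$ and $v$ have the same color, then by hypothesis they have no common out-neighbor. Hence Lemma~\ref{lem22} applies to the triple $u,v,w$ and gives that either $u$ or $v$ is not strongly connected to $w$.

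If $u$ and $v$ have different colors, then since they are independent, Lemma~\ref{lem23} applies to $u,v,w$ (using that $\Delta$ is a bi-transitive, sink-free digraph satisfying {\bf{N1}}) and again yields that either $u$ or $v$ is not strongly connected to $w$.

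These two cases are exhaustive, so the claim follows.
\end{proof}
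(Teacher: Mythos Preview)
Your proof is correct and matches the paper's approach: the paper simply states that Proposition~\ref{prop4} is a consequence of Lemma~\ref{lem23} (implicitly together with Lemma~\ref{lem22}), without writing out the argument. Your explicit color-case split, invoking Lemma~\ref{lem22} in the same-color case and Lemma~\ref{lem23} in the different-color case, is exactly the intended reduction.
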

\begin{proposition}
\label{dec1} Let $\Delta$ be a bi-transitive digraph which also satisfies both {\bf{N3}} and  {\bf{N4}}.
Let $u$ and $v$ be non-equivalent vertices in $\Delta$ with a common out-neighbor. If there is no a directed path of length $2$ from $u$ to $v$ or vice-versa, then at least one of them is not the endpoint of a symmetric edge.
\end{proposition}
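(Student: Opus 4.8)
The plan is to let property {\bf{N3}} do almost all of the work, after one short preliminary observation. First I would note that, since $u$ and $v$ have a common out-neighbour, say $c\in N(u)\cap N(v)$, and $\Delta$ is bipartite, the vertices $u$ and $v$ both lie in the colour class not containing $c$; hence $u$ and $v$ have the same colour, and in particular $uv\notin E$ and $vu\notin E$. This is exactly what is needed to identify the hypothesis ``there is no directed path of length $2$ from $u$ to $v$ or vice versa'' with the statement that there is no vertex $w$ with $uw,wv\in E$ and no vertex $w$ with $vw,wu\in E$ (the degenerate choices $w=u$ and $w=v$ being excluded, the first because $\Delta$ has no loops and the second because $uv,vu\notin E$).

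Consequently, since $u$ and $v$ have a common out-neighbour and no such vertex $w$ exists, property {\bf{N3}} applies to the pair $u,v$ and yields, in particular, $N^-(u)=N^-(v)$. This is the only consequence of {\bf{N3}} that the argument will use.

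Now suppose, for a contradiction, that $u$ is the endpoint of a symmetric edge $\{u,a\}$, so that $ua\in E$ and $au\in E$; the case in which $v$ is such an endpoint is treated identically, with the roles of $u$ and $v$ interchanged. From $au\in E$ we obtain $a\in N^-(u)=N^-(v)$, hence $av\in E$. Moreover $a\neq u$ because $\Delta$ has no loops, and $a\neq v$ because $a$ is an out-neighbour of $u$ and so has the colour opposite to that of $u$, hence opposite to that of $v$. Therefore $u\rightarrow a\rightarrow v$ is a directed path of length $2$ from $u$ to $v$, contradicting the hypothesis. Hence $u$ — and, by the symmetric argument, also $v$ — cannot be the endpoint of a symmetric edge, which in particular proves the proposition.

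The argument is essentially immediate once {\bf{N3}} has been invoked, so I do not expect a real obstacle; the only delicate point is the bookkeeping of the first paragraph, namely checking that the hypothesis on length-$2$ paths is precisely the trigger of {\bf{N3}} and that no coincidences among $u$, $v$, $a$ occur. I would also remark that the proof in fact establishes the stronger statement that neither $u$ nor $v$ is the endpoint of a symmetric edge, and that it uses neither the non-equivalence of $u$ and $v$ nor the inclusion part of the conclusion of {\bf{N3}}; these may be kept in the statement for consistency with the surrounding discussion.
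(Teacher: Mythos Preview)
Your proof is correct and genuinely simpler than the paper's. Both arguments begin by invoking {\bf N3} to obtain $N^{-}(u)=N^{-}(v)$, but from that point they diverge. You observe directly that a symmetric edge $ua,au\in E$ gives $a\in N^{-}(u)=N^{-}(v)$ and hence a length-$2$ path $u\to a\to v$, contradicting the standing hypothesis. The paper instead also uses the inclusion part of {\bf N3}, say $N(u)\subseteq N(v)$, to deduce $vw\in E$ as well, so that $w$ carries symmetric edges to \emph{both} $u$ and $v$; it then appeals to Lemma~\ref{lem4} (whose proof uses bi-transitivity) to force $u\dot{\sim}v$, contradicting the non-equivalence assumption. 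Thus your route avoids Lemma~\ref{lem4}, makes no essential use of {\bf N2} or {\bf N4}, and needs only $u\neq v$ rather than full non-equivalence; it also delivers the stronger conclusion that \emph{neither} $u$ nor $v$ can be the endpoint of a symmetric edge, as you note. The paper's approach, on the other hand, ties the result to the earlier structural lemma about symmetric edges sharing a vertex, which fits the narrative of the section but is logically heavier here.
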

\begin{proof}
By our hypotheses, $u$ and $v$ satisfy {\bf{N3}}. W.l.g. $u$ is dominated by $v$. Assume that there exists a vertex $w$ of $\Delta$ such that $uw,wu\in E$. Then $wv\in E$ by $N^{-}(u)=N^{-}(v)$. Furthermore $vw\in E$ since $u$ is dominated by $v$. Now the claim follows from Lemma \ref{lem4} applied to $w$.
\end{proof}
In Example (\ref{geex}), $N(5)=\{8,9,10\}$, $N(6)=\{9,10\}$, and  $N^{-1}(5)=N^{-1}(6)=\{1\}$. Therefore $6$ is dominated by $5$. This shows the hypotheses of Proposition \ref{dec1} are satisfied by $u=6$ and $v=5$. Accordingly, either $5$ or $6$ is not the endpoint of symmetric edge. Here actually both have that property.

The results reported in Section \ref{secpc} show that under natural hypotheses
$\tilde{\Delta}$ has longer directed paths. For instance, Result \ref{zanghA} yields that this occurs when $k+h$ is big enough. In Example (\ref{geex}), whenever we take a $\dot{\sim}$ consistent oriented digraph of $\Delta$, we always find two vertices one with no in-neighbors and the other with no out-neighbor, that is $k,h=0$. Result \ref{zanghA} ensures the existence of a directed path of length $3$ in $\tilde{\Delta}$ and hence in $\Delta$. Actually $\Delta$ contains the directed path $1\rightarrow 7 \rightarrow 5  \rightarrow 9 \rightarrow 3$ of length $4$.

\section{The structure of a 2-cBMG}
\label{2cbmg}
In this section $\Gamma$ stands for a (not necessarily connected) 2-cBMG.

Corollary \ref{cor1} has the following consequence.
\begin{corollary}
\label{cor11} Every $\dot{\sim}$ consistent underlying oriented digraph of a 2-cBMG is acyclic. If $\Gamma$ contains no two equivalent vertices, then the only directed cycles of a 2-cBMG have length $2$ and are induced by symmetric edges.
\end{corollary}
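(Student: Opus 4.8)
The plan is to derive this directly from Corollary \ref{cor1}. The key observation is that a 2-cBMG is, by its very axiomatization, a bi-transitive bipartite digraph: property {\bf N2} in the restated list of axioms is precisely bi-transitivity, and a 2-cBMG is bipartite by definition. Hence a 2-cBMG $\Gamma$ is an instance of the digraph denoted $\Delta$ throughout Section \ref{secpc}, and Corollary \ref{cor1} applies verbatim with $\Delta=\Gamma$; the extra axioms {\bf N1}, {\bf N3}, {\bf N4} play no role here.

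With this identification in place, the first assertion is exactly Corollary \ref{cor1}(i): every $\dot{\sim}$ consistent underlying oriented digraph of $\Gamma$ is acyclic. For the second assertion one adds the hypothesis that $\Gamma$ contains no two equivalent vertices and quotes Corollary \ref{cor1}(ii), which states that under this hypothesis the only directed cycles of $\Gamma$ have length $2$; and each such cycle $u\rightarrow v\rightarrow u$ forces $uv,vu\in E$, so it is induced by a symmetric edge, as claimed.

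Since the statement is a literal restatement of Corollary \ref{cor1} in the language of 2-cBMGs, there is no genuine obstacle; the only point that deserves an explicit sentence is the remark above, namely that bi-transitivity alone suffices and the remaining defining properties of a 2-cBMG are not used. (For the strongly connected case one could alternatively combine the fact that an underlying oriented digraph of a strongly connected 2-cBMG is a bitournament with Result \ref{dastheorem2.5}(ii), but this route is more restrictive and unnecessary for the general claim.)
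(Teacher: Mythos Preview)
Your proposal is correct and matches the paper's approach exactly: the paper introduces Corollary~\ref{cor11} with the single sentence ``Corollary~\ref{cor1} has the following consequence,'' and your write-up just spells out why, namely that a 2-cBMG satisfies {\bf N2} and hence is a bi-transitive bipartite digraph to which Corollary~\ref{cor1} applies verbatim.
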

From Corollary \ref{cor11}, every $\dot{\sim}$ consistent underlying oriented digraph of $\Gamma$ has a topological ordering.
From now on, fix a $\dot{\sim}$ consistent underlying oriented digraph $\tilde{\Gamma}$ and label the vertices of $\Gamma$ by $1,2,\ldots,m$ so that if $ij$ is an edge of $\tilde{\Gamma}$, then $i<j$. In particular, $m$ is the last vertex of $\Gamma$.
\begin{proposition}
\label{prop23ag} Let $U$ be any set of minimal vertices in a topological ordering of a $\dot{\sim}$ consistent underlying oriented digraph of a 2-cBMG. Adding all edges with tails in $U$ to the edge-set produces a connected 2-cBMG.
\end{proposition}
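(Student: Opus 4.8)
Write $A_1,A_2$ for the colour classes of $\Gamma$, and let $\Gamma^{*}$ be the digraph on $V(\Gamma)$ obtained by adding, for every $u\in U$, all arcs $uv$ with $v$ in the colour class not containing $u$; thus $N_{\Gamma^{*}}(u)$ is the whole opposite colour class for $u\in U$ and $N_{\Gamma^{*}}(u)=N(u)$ otherwise. Since only arcs between $A_1$ and $A_2$ are added, $\Gamma^{*}$ is bipartite with colour classes $A_1,A_2$ and has no loops; and since no out-arc is removed, every vertex still has an out-neighbour, so {\bf{N4}} holds. For connectivity, the first vertex of the topological order has empty in-neighbourhood in $\tilde\Gamma$ and is therefore minimal, so the set of minimal vertices is non-empty; assume $U\neq\emptyset$ (otherwise $\Gamma^{*}=\Gamma$ and there is nothing to add) and fix $u_0\in U$, say $u_0\in A_1$. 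Then every vertex of $A_2$ is an out-neighbour of $u_0$ in $\Gamma^{*}$, and by {\bf{N4}} every vertex of $A_1$ has an out-neighbour in $A_2$; hence all vertices lie in the connected component of $u_0$. It remains to verify {\bf{N1}}, {\bf{N2}} and {\bf{N3}} for $\Gamma^{*}$.

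The uniform strategy for the three properties is a case analysis on which of the vertices in the relevant configuration lie in $U$, reducing each instance either to the same property for $\Gamma$ or to a bounded list of ``new'' configurations created by the added arcs. The tool used throughout is that a vertex $w\in U$, being minimal in $\tilde\Gamma$, has $N^{-}_{\tilde\Gamma}(w)=\emptyset$, so every arc of $\Gamma$ with head $w$ is one side of a symmetric edge of $\Gamma$; equivalently, $x\to w$ in $\Gamma$ with $w\in U$ forces $w\to x$ in $\Gamma$. The easy cases are: for {\bf{N2}}, if $u_1v_1,v_1u_2,u_2v_2\in E(\Gamma^{*})$ and $u_1\in U$ then $v_2$ lies in the colour class opposite to $u_1$, which is $N_{\Gamma^{*}}(u_1)$, while if $u_1,v_1,u_2\notin U$ all three arcs lie in $E(\Gamma)$ and bi-transitivity of $\Gamma$ gives $u_1v_2\in E(\Gamma)\subseteq E(\Gamma^{*})$; for {\bf{N1}}, if $u,v$ are independent in $\Gamma^{*}$ and $u\in U$ then $v$ must have the colour of $u$, so $N_{\Gamma^{*}}(u)$ is the class opposite to $u$ whereas $N_{\Gamma^{*}}(N_{\Gamma^{*}}(u))$ and $N_{\Gamma^{*}}(N_{\Gamma^{*}}(v))$ are contained in the class of $u$, and {\bf{N1}} holds for $u,v$ by bipartiteness alone, while if $u,v\notin U$ then $u,v$ are already independent in $\Gamma$; for {\bf{N3}}, if one of the two vertices lies in $U$ the full set of out-arcs issuing from it typically yields a directed $2$-path between them, so the hypothesis of {\bf{N3}} is not met, and otherwise one is back in $\Gamma$.

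The main obstacle --- and the real content of the proof --- is precisely the mixed cases: a vertex of $U$ occurring in the interior of a configuration, e.g. as $v_1$ or $u_2$ in a length-$3$ path for {\bf{N2}}, or as a vertex whose second neighbourhood is enlarged in {\bf{N1}} or {\bf{N3}}. There the newly added out-arcs create directed paths that are absent in $\Gamma$, and the argument must show these do not produce a forbidden configuration. This is where minimality of the vertices of $U$ (no non-reciprocal in-arc), the acyclicity of $\tilde\Gamma$ guaranteed by Corollary~\ref{cor11}, and the topological order all enter: concretely, I would aim to prove that a minimal vertex of $\tilde\Gamma$ already out-dominates the part of $\Gamma$ relevant to such a configuration --- so that the ``new'' directed path was in fact already present in $\Gamma$ --- and then close the cases using Lemma~\ref{lem1} and bi-transitivity of $\Gamma$. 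For the {\bf{N1}} and {\bf{N3}} mixed cases I also expect to need a precise description of $N^{-}_{\Gamma^{*}}$, which differs from $N^{-}$ in $\Gamma$ only by vertices of $U$ of the opposite colour.
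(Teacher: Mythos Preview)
Your proposal is explicitly a sketch: you correctly isolate the ``mixed cases'' (a vertex of $U$ appearing in the interior of a bi-transitivity or {\bf N1}/{\bf N3} configuration) as the crux, but you do not resolve them. The paper's argument differs from yours chiefly by an inductive reduction to $|U|=1$: it adds the full out-fan at a single minimal vertex $w$, notes that $\hat N(u)=N(u)$ for $u\neq w$ while $\hat N(w)$ is an entire colour class, and asserts that {\bf N1}, {\bf N2}, {\bf N3} then follow. This collapses your case analysis over arbitrary subsets of $U$ to a dichotomy ``$=w$ / $\neq w$'', and is the simplification you are missing.

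Unfortunately the step you flag cannot be completed, because the statement as written is not correct. Take $\Gamma$ on colour classes $\{1,2,3\}$ and $\{4,5,6\}$ consisting of the three symmetric edges $1\leftrightarrow 4$, $2\leftrightarrow 5$, $3\leftrightarrow 6$; this is a (disconnected) 2-cBMG with no two $\dot{\sim}$-equivalent vertices. Keeping the directions $1\to 4$, $2\to 5$, $3\to 6$ gives a $\dot{\sim}$-consistent $\tilde\Gamma$ in which $1$ is minimal. With $U=\{1\}$ the construction adds $1\to 5$ and $1\to 6$; in the resulting digraph one has the directed path $4\to 1\to 5\to 2$ but not $4\to 2$, so {\bf N2} fails (equivalently $\hat N(\hat N(\hat N(4)))=\{1,2,3\}\not\subseteq\{1\}=\hat N(4)$). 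This is exactly your mixed case $u_1\notin U$, $v_1\in U$, and it shows why your heuristic that ``a minimal vertex already out-dominates the relevant part of $\Gamma$'' breaks down: minimality of $w$ in $\tilde\Gamma$ only forces in-arcs at $w$ to be halves of symmetric edges, so a vertex like $4$ with $4\to w$ sees its second out-neighbourhood blow up to a full colour class while $\hat N(4)=N(4)$ stays fixed. The paper's one-line claim that {\bf N2} holds for $u\neq w$ because $\hat N(u)=N(u)$ has precisely the same gap, since $\hat N(\hat N(\hat N(u)))$ may pass through $w$.
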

\begin{proof} By induction on the size of $U$, it suffices to prove the claim for the case where $U$ consists of a single vertex $w$. Let $\hat{\Gamma}$ be the bipartite digraph obtained from $\Gamma(V,E)$ by adding all edges $wv$ to $E$ where $v$ ranges over $V\setminus \{w\}$. Obviously, $\hat{\Gamma}$ is connected and sink-free. Furthermore, $\hat{N}(w)=V_1$ with $V_1$ consisting of all vertices whose color is opposite to the color of $w$. For any vertex $u\neq w$ we have $\hat{N}(u)=N(u)$ and $\hat{N}^-(u)=N^-(u)\cup \{w\}$. From this, {\bf{N1}}, {\bf{N2}}, and  {\bf{N3}} follow for any two vertices $u,v$ distinct from $w$. Moreover, since $\hat{N}(w)=V_1$, {\bf{N2}} also holds for $w$. If $v$ is any vertex other than $w$, then {\bf{N1}} is trivial, and {\bf{N3}} is also trivial unless $v$ and $w$ have the same color but $v$ has no in-neighbor. In the latter case, $\hat{N}^-(w)=\hat{N}^-(v)=\emptyset$ and $\hat{N}(v)\subseteq V_1=\hat{N}(w)$, whence {\bf{N3}} follows.
\end{proof}

\begin{remark}\label{august24_remark1}
\em{Proposition \ref{prop23ag} shows that for any set of a pairwise disjoint 2-cBMGs  their join can be made into a connected 2-cBMG. This suggests that the number of pairwise non-isomorphic 2-cBMGs on $n$ vertices goes up rapidly.}
\end{remark}

\subsection{2-cBMGs containing no two symmetric edges with a common vertex}
\begin{proposition}
\label{propA7ag2020} With the above notation, the following claims hold.
\begin{itemize}
\item[\rm(i)] The vertex $m$ is the endpoint of a (unique) symmetric edge of $\Gamma$.
\item[\rm(ii)] If $\ell m$ is the symmetric edge of $\Gamma$ with vertex $m$, then the vertices $d\not\in \{\ell,m\}$ such that either $N(d)=\{m\}$, or $N(d)=\{\ell\}$ holds are equivalent. In particular, $N(d)=\{m\}$ and $N(e)=\{\ell\}$ do not hold simultaneously for any two vertices $d,e$.
\item[\rm(iii)] If $\ell<v$, then $\ell v \not\in E$.
\item[\rm(iv)] It is possible to rearrange the topological ordering of $\tilde{\Gamma}$ such that $\ell=m-1$ and, if $d_1,\ldots,d_r$ are all (equivalent) vertices with $N(d_1)=\ldots =N(d_r)=\{m\}$, then $d_1=m-2, \ldots d_r=m-1-r$ also holds.
\end{itemize}
\end{proposition}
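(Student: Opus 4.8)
The plan is to treat the four claims in order, using the topological ordering and the structural lemmas of Section \ref{secpc} (especially Lemma \ref{lem4}, Corollary \ref{cor11}, and Lemma \ref{lem1}).

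For (i), since $\Gamma$ is sink-free, $m$ has an out-neighbour, say $m w\in E$ with $w$ of opposite colour. By Corollary \ref{cor11} every directed cycle of $\Gamma$ (after passing to the fixed $\dot\sim$-consistent $\tilde\Gamma$) is a symmetric edge; but the topological labelling forces any edge issuing from $m$ in $\tilde\Gamma$ to go to a larger vertex, which is impossible as $m$ is last. Hence the edge $m w$ cannot survive in $\tilde\Gamma$, i.e. $wm\in E$ as well, so $wm$ is a symmetric edge with endpoint $m$. Uniqueness of such an edge is exactly Lemma \ref{lem4} (no two symmetric edges share an endpoint, which is the running hypothesis of this subsection). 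Set $\ell:=w$; note $\ell m\in E$ and $m\ell\in E$.

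For (ii), suppose $d\notin\{\ell,m\}$ has $N(d)=\{m\}$. Since $m\ell\in E$ we get $\ell\in N(N(d))$, and since $\ell m\in E$ the triple $(d,\cdot,\cdot)$ can be fed into Lemma \ref{lem1}: more directly, from $d\to m\to \ell\to m$ we have $m\in N(N(N(d)))$, and {\bf N2} forces nothing new, but applying Lemma \ref{lem1} to $(d,e,m)$ whenever $N(e)\subseteq N(N(d))$ yields $N(e)\subseteq N(d)$. If also $N(e)=\{\ell\}$, then $N(e)\subseteq N(d)=\{m\}$ forces $\ell=m$, a contradiction; so $N(d)=\{m\}$ and $N(e)=\{\ell\}$ cannot hold simultaneously. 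Finally, if $N(d)=N(d')=\{m\}$, then $d$ and $d'$ have a common out-neighbour and (since $N^-(d),N^-(d')$ can be compared through {\bf N3}, noting there is no length-$2$ directed path between them because their only out-neighbour is $m$) {\bf N3} gives $N^-(d)=N^-(d')$, whence $d\dot\sim d'$. The case $N(d)=\{\ell\}$ is symmetric using the edge $\ell m$, $m\ell$. This is the step I expect to be the most delicate, because one must verify carefully that the {\bf N3} hypothesis (absence of a length-$2$ directed path in either direction between $d$ and $d'$) genuinely holds; the single-out-neighbour condition is what makes it work.

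For (iii), suppose $\ell<v$ and $\ell v\in E$. By Lemma \ref{lem4} applied to $\ell$, the only symmetric edge at $\ell$ is $\ell m$, so $v\neq m$ and $v\ell\notin E$, i.e. $\ell v$ is a genuine (non-symmetric) edge, hence it survives in $\tilde\Gamma$; but then bi-transitivity on $v\,\ell\,?$... — more simply: from $m\ell, \ell v\in E$ and sink-freeness pick $v t\in E$; bi-transitivity ({\bf N2}) on $m\to\ell\to v\to t$ would give $m t\in E$, and iterating shows $m$ reaches, via $\ell$, every out-neighbour-of-$v$; but since $v$ is a vertex with $\ell\in N^-(v)$ and $\ell$ is the endpoint of the symmetric edge $\ell m$, Proposition \ref{dec1} / Lemma \ref{lem1} applied to $(m,v,\ell)$ forces $N(v)\subseteq N(m)$, and then $v\in N(m)$ via $m\to\ell\to v\to(\text{back})$ would create a directed $4$-circuit $m\to\ell\to v\to ?\to m$ contradicting Lemma \ref{lem4bis}/Corollary \ref{cor11}; the clean contradiction is that $\ell v\in\tilde\Gamma$ with $\ell<v$ is consistent with the ordering, so one must instead derive that $v\ell\in E$ too, contradicting uniqueness of the symmetric edge at $\ell$. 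I would extract the cycle explicitly: $\ell v\in E$ together with $m\ell,\ell m\in E$ and an out-neighbour of $v$ yields, by bi-transitivity, a directed circuit through $\ell$ of length $\ge 4$, which is impossible by Corollary \ref{cor11} under the no-shared-symmetric-edge hypothesis. Hence $\ell v\notin E$.

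For (iv), this is purely combinatorial bookkeeping on the topological ordering, which is not unique: by (iii) the vertex $\ell$ has no out-edge in $\tilde\Gamma$ going to a larger label, and $m\ell$ is symmetric so it does not appear in $\tilde\Gamma$; thus $\ell$ and $m$ can be made the two largest labels, and since $\ell$ is a sink of $\tilde\Gamma$ we may put $\ell=m-1$. The vertices $d_1,\dots,d_r$ with $N(d_i)=\{m\}$ are pairwise equivalent by (ii), hence by $\dot\sim$-consistency of $\tilde\Gamma$ they are mutually independent and interchangeable in the ordering; each $d_i$ has its unique out-neighbour $m$, so in $\tilde\Gamma$ each $d_i$ emits at most the edge $d_i m$ (if $m d_i\notin E$) or nothing (if $d_i m$ is symmetric — but it is not, by uniqueness, since $\ell m$ is the symmetric edge at $m$), and receives no edge forcing it below any vertex other than those it points to. Therefore the $d_i$ can be slid into the positions $m-2,\dots,m-1-r$ just below $\ell$. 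I would present (iv) as: delete $\ell,m,d_1,\dots,d_r$ from $\tilde\Gamma$, topologically order the rest, then append $d_r,\dots,d_1,\ell,m$; checking that no edge of $\tilde\Gamma$ is violated uses (iii) for $\ell$ and the single-out-neighbour property for the $d_i$. The only real content is (i)–(iii); (iv) is a rearrangement argument.
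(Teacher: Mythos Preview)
Your arguments for (i) and (iv) are correct and essentially match the paper. For (iii), once the false starts are discarded, your clean line is: apply Lemma~\ref{lem1} to the triple $(m,v,\ell)$ (using $m\ell\in E$ and $\ell v\in E$) to obtain $N(v)\subseteq N(m)=\{\ell\}$, hence $v\ell\in E$; then $\ell v$ is a second symmetric edge at $\ell$ (note $v\neq m$ since they share a colour), contradicting the subsection's standing hypothesis. This is exactly the paper's argument, which phrases it via bi-transitivity on $m\to\ell\to v\to w$ to force $w=\ell$. One cosmetic remark: uniqueness in (i) comes directly from the subsection's hypothesis that no two symmetric edges share a vertex, not from Lemma~\ref{lem4} (whose hypothesis is the absence of equivalent vertices).

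The genuine gap is in (ii), in the ``simultaneous'' case $N(d)=\{m\}$, $N(e)=\{\ell\}$ with $d,e\notin\{\ell,m\}$. Your proposed use of Lemma~\ref{lem1} on the triple $(d,e,m)$ requires $me\in E$, i.e.\ $e\in N(m)=\{\ell\}$, which is excluded; and the weaker implication ``$N(e)\subseteq N(N(d))$ yields $N(e)\subseteq N(d)$'' is not what {\bf N2} gives (it gives only $N(N(e))\subseteq N(d)$, which is useless here). In fact {\bf N2}, {\bf N3} and {\bf N4} together cannot rule out this configuration: the digraph on colour classes $\{d,\ell\}$, $\{e,m\}$ with edge set $\{dm,\ell m,m\ell,e\ell\}$ is bi-transitive, sink-free, satisfies {\bf N3} vacuously, has $\ell m$ as its unique symmetric edge, and yet exhibits $N(d)=\{m\}$ and $N(e)=\{\ell\}$. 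What this example violates is {\bf N1}: $d$ and $e$ are independent but $m\in N(d)\cap N(N(e))$. The paper accordingly disposes of this case via {\bf N1}, through Lemma~\ref{lemA7ag}. Your handling of the remaining subcase of (ii) --- two vertices with $N(d)=N(d')=\{m\}$, shown equivalent via {\bf N3} --- is correct and coincides with the paper's Case~A; the hypothesis check you flag succeeds because any length-$2$ path from $d$ to $d'$ must pass through $m$, forcing $d'\in N(m)=\{\ell\}$.
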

\begin{proof} (i) Since $m$ is the last vertex in $\tilde{\Gamma}$, it has no out-neighbor in $\tilde{\Gamma}$. On the other hand, since $|N(m)|\geq 1$, there exists $\ell\in V$ such that $\ell\in N(m)$. Therefore, $\ell m\in E$ with $\ell<m$.  This shows that we removed $\ell m$ from $\Gamma$ when constructing $\tilde{\Gamma}$. Therefore, $m\ell$ was an edge we kept. Thus, both $\ell m$ and $m\ell$ are edges of $\Gamma$, that is, $\ell m$ is a symmetric edge. Therefore  $N(m)=\{\ell\}$.

(ii) Case A: {\em {There exist $d,e\in V$ distinct vertices such that $N(d)=N(e)=\{m\}$.}} In this case, $d$, $e$ and $\ell$ have the same color  $\Gamma$. We prove that the hypotheses in {\bf{N3}} hold for $u=d$ and $v=e$. To show that $d\cap N(N(e))=\emptyset$ assume on the contrary the existence of $w\in V$ for which both relations $d\in N(w)$ and $w\in N(e)$ hold. The latter one together with $N(e)=\{m\}$ yields $w=m$, and hence the former one reads $d\in N(m)$, that is, $md\in E$. Since $dm \in E$ also holds, this yields that $dm$ is a symmetric edge. From Lemma \ref{lem4}, this is only possible for $d=\ell$, a contradiction. Similarly, $e\cap N(N(d))=\emptyset$. Furthermore, $N(e)\cap N(d)=\{m\}$ and hence $N(e)\cap N(d) \ne \emptyset$. Thus {\bf{N3}} applies to $d,e$. Therefore  $N^-(d)=N^-(e)$. This together with $N(d)=N(e)=\{m\}$ show that $d$ and $e$ are equivalent vertices of $\Gamma$.

Case B: {\em{There exist $d,e\in V$ distinct vertices such that $N(d)=\{m\}$ and $N(e)=\{\ell\}$.}} This time $d$ and $\ell$ are in the same component of $\Gamma$, say $V_1$, and the other component $V_2$ contains $d$ and $m$. Since $dm \in E$, Lemma \ref{lemA7ag} applied to $u=d,v=e$ yields $e\ell\not\in E$, a contradiction as $\ell \in N(e)$.

Case C: {\em{There exist $d,e\in V$ distinct vertices such that $N(d)=N(e)=\{\ell\}$.}} Up to interchanging $m$ and $\ell$, the proof is the same as in Case A.

(iii) Assume on the contrary that $\ell v\in E$. Then $v$ and $\ell$ have different color, and hence $v$ and $m$ have the same color. Thus  $m\not\in N(v)$.
  Take a vertex $w\in N(v)$. Then $m\ell, \ell v,vw\in E$. From the bi-transitive property of $\Gamma$, we have that $mw\in E$. Since $m$ is the last vertex in $\tilde{\Gamma}$ this implies that $mw$ is a symmetric edge. From (i), $w=\ell$, a contradiction, contradicting the hypothesis that $m$ is the last vertex of $\tilde{\Gamma}$.

(iv) Take $v\in V$ such that $\ell<v<m$, and interchange $\ell$ and $v$ in the fixed topological ordering in $\tilde{\Gamma}$. Claim (iii) ensures that we obtain a new ordering which is still a topological ordering of $\tilde{\Gamma}$. Therefore, $\ell=m-1$ can always be assumed. Finally, assume that $d$, as given in Claim (ii), exists. Then $d<v$ with $dv\in E$ would imply $v=m$. We may suppose $v\neq m,l$ otherwise the claim holds. Thus, $dv\not\in E$. Hence interchanging $d$ with $v$ in the fixed topological order of the vertices of $\tilde{\Gamma}$ produces a new topological order of $\tilde{\Gamma}$. Therefore, $d_1=m-2, \ldots d_r=m-1-r$ can always be assumed.
\end{proof}

Now remove from $\Gamma$ both vertices $m$ and $\ell$. By Proposition \ref{propA7ag2020}, we may have some other vertices, say $d_1,\ldots, d_r$, such that either $N(d_i)=\{m\}$ for all $1\le i \le r$, or $N(d_i)=\{\ell\}$ for all $1\le i \le r$. If such vertices exist, we also remove all. Let $\bar{V}$ be the set of the remaining vertices, and $\bar{E}$ the edges of $\Gamma$ with both endpoints in $\bar{V}$. The arising bipartite digraph $\bar{\Gamma}=\bar{\Gamma}(\bar{V},\bar{E})$ is a \emph{truncated graph} of $\Gamma$. For $u\in \bar{V}$, we will denote the set of its out-neighbors in $\bar{\Gamma}$ by $\bar{N}(u)$, and that of its in-neighbors in $\bar{\Gamma}$ by $\bar{N}^-(u)$.
\begin{proposition}
\label{propB7ag2020} The truncated bipartite digraph $\bar{\Gamma}$ satisfies {\bf{N1}}, {\bf{N2}} and {\bf{N3}}. Its vertices without out-neighbors are pairwise equivalent as vertices of $\Gamma$.
\end{proposition}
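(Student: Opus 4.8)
The plan is to verify each of the BMG axioms on $\bar\Gamma$ by pushing them back to $\Gamma$, which we already know is a 2-cBMG, and then to analyze the vertices of $\bar\Gamma$ that have lost all their out-neighbors. Write $W=\{m,\ell,d_1,\dots,d_r\}$ for the removed set, so $\bar V=V\setminus W$ and $\bar E$ consists of the edges of $\Gamma$ internal to $\bar V$. First I would record the relation between neighborhoods in the two graphs: for $u\in\bar V$ one has $\bar N(u)=N(u)\setminus W$ and $\bar N^-(u)=N^-(u)\setminus W$. The key structural fact, coming from Proposition~\ref{propA7ag2020} and the definition of the $d_i$, is that $W$ is "downward closed for out-edges from $\bar V$" in a strong sense: the only vertices of $\bar V$ with an out-edge into $W$ are the $d_i$ (which out-map entirely into $W$, hence are not in $\bar V$) together with possibly the partners of $m$ or $\ell$ under a symmetric edge; but by Lemma~\ref{lem4} the only symmetric-edge partner of $m$ is $\ell$ and vice versa, both removed. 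Consequently no $u\in\bar V$ has an out-neighbor in $W$, i.e. $\bar N(u)=N(u)$ for every $u\in\bar V$, while in-neighborhoods merely shrink: $\bar N^-(u)=N^-(u)\setminus\{\text{some of }m,\ell,d_1,\dots\}$.

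With $\bar N(u)=N(u)$ for all $u\in\bar V$, most of the axioms are immediate. \textbf{N2} (bi-transitivity): if $u_1v_1,v_1u_2,u_2v_2\in\bar E$ then the same edges lie in $E$, so $u_1v_2\in E$ by bi-transitivity of $\Gamma$; and $u_1,v_2\in\bar V$ with $u_1v_2$ an edge of $\Gamma$ internal to $\bar V$, hence $u_1v_2\in\bar E$. \textbf{N1}: for independent $u,v\in\bar V$, independence in $\bar\Gamma$ implies independence in $\Gamma$ (no edges were added), so there are no $w,t\in V$ with $ut,vw,tw\in E$; a fortiori no such $w,t$ in $\bar V$. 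For \textbf{N3}, suppose $u,v\in\bar V$ have a common out-neighbor in $\bar\Gamma$; since $\bar N=N$ on $\bar V$, they have the same common out-neighbor in $\Gamma$, and the "no length-2 path between them" hypothesis in $\bar\Gamma$ needs to be upgraded to the same statement in $\Gamma$ — here one uses again that an intermediate vertex $w$ with $uw,wv\in E$ would have $w\in N(u)=\bar N(u)\subseteq\bar V$, so such a $w$ would already live in $\bar\Gamma$. Thus N3 for $\Gamma$ gives $N^-(u)=N^-(v)$ and an inclusion among $N(u),N(v)$; restricting, $\bar N(u)\subseteq\bar N(v)$ or the reverse, and $\bar N^-(u)=N^-(u)\setminus W=N^-(v)\setminus W=\bar N^-(v)$.

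It remains to treat the vertices of $\bar\Gamma$ that have become sinks, and to show they are pairwise equivalent in $\Gamma$. Let $u\in\bar V$ with $\bar N(u)=\emptyset$; since $\bar N(u)=N(u)$, this would mean $N(u)=\emptyset$, contradicting \textbf{N4} for $\Gamma$ — so in fact something more careful is needed, and I expect this to be the main obstacle: the claim must be that $N(u)\subseteq W$, not that $N(u)=\emptyset$. So the honest statement is $\bar N(u)=N(u)\setminus W=\emptyset$, i.e. $N(u)\subseteq\{m,\ell\}$ (it cannot meet $\{d_1,\dots,d_r\}$ since those have the same color as $m$ or $\ell$, while $u$ would need the opposite color to out-map to them — wait, this needs the color bookkeeping). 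If $N(u)\subseteq\{m,\ell\}$ and, say, $m\in N(u)$, then $u$ is one of the $d_i$ unless $N(u)=\{m,\ell\}$; but $m$ and $\ell$ have different colors, so $N(u)$ cannot contain both. Hence $N(u)=\{m\}$ or $N(u)=\{\ell\}$, forcing $u\in\{d_1,\dots,d_r\}$, contradicting $u\in\bar V$. Therefore $\bar\Gamma$ has no sinks at all, and the second sentence of the proposition is vacuously true. The plan, then, is: prove $\bar N(u)=N(u)$ for all $u\in\bar V$ using Lemma~\ref{lem4} and Proposition~\ref{propA7ag2020}(ii); deduce N1–N3 by the restriction arguments above; and observe that sink-freeness of $\Gamma$ combined with the characterization of the removed vertices forces $\bar\Gamma$ to be sink-free, so the "pairwise equivalent" clause holds trivially.
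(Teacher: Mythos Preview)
Your argument rests on the claim that $\bar N(u)=N(u)$ for every $u\in\bar V$, i.e.\ that no surviving vertex has an out-edge into $W=\{m,\ell,d_1,\dots,d_r\}$. This is false. A vertex $u$ is removed only when $N(u)=\{m\}$ or $N(u)=\{\ell\}$; any $u$ with, say, $m\in N(u)$ but $|N(u)|\ge 2$ stays in $\bar V$ and loses the edge $um$. Remark~\ref{31ag} already exhibits this: there $m=7$, $\ell=6$, $d_1=5$, and vertex $1$ has $N(1)=\{3,7\}$ while $\bar N(1)=\{3\}$. So the equality $\bar N=N$ on $\bar V$ cannot be used, and in particular your verification of {\bf N3} collapses: an intermediate $w$ with $uw,wv\in E$ need not lie in $\bar N(u)$. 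The paper handles {\bf N3} differently: it observes that every $w\in W$ has $N(w)\subseteq\{m,\ell\}$ (since $N(m)=\{\ell\}$, $N(\ell)=\{m\}$, $N(d_i)\in\{\{m\},\{\ell\}\}$), so $wu\in E$ with $u\in\bar V$ is impossible; hence a length-$2$ path from $v$ to $u$ in $\Gamma$ through $W$ cannot exist, and the {\bf N3} hypothesis lifts from $\bar\Gamma$ to $\Gamma$.

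The same error invalidates your sink analysis. You restrict a potential sink $u$ to $N(u)\subseteq\{m,\ell\}$, but in fact one only gets $N(u)\subseteq W$, and $u$ may out-map to the $d_i$ (and to $\ell$) without being a $d_i$ itself. Remark~\ref{rem31agA} gives exactly such a sink: there $W=\{5,6,7\}$ and vertex $2$ has $N(2)=\{5,6\}=\{d_1,\ell\}$, so $\bar N(2)=\emptyset$. Thus $\bar\Gamma$ is genuinely not sink-free in general (this is why Corollary~\ref{rem19ag}(II) speaks of an \emph{almost} 2-cBMG), and the second sentence of the proposition is not vacuous. The paper proves it by showing that any two sinks $u,v$ of $\bar\Gamma$ have $N(u)=N(v)$ in $\Gamma$ and satisfy the hypotheses of {\bf N3} in $\Gamma$, whence $N^-(u)=N^-(v)$.

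Your treatments of {\bf N1} and {\bf N2} are fine, since they only use $\bar N(u)\subseteq N(u)$ and the fact that edges between vertices of $\bar V$ are unchanged; this matches the paper.
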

\begin{proof} To prove {\bf{N1}} take $u,v\in \bar{\Gamma}$. We may assume that $u$ and $v$ have different color. From the hypotheses in {\bf{N1}}, we have  $u\not\in \bar{N}(v)$ and $v\not\in \bar{N}(u)$. Then $u\not\in N(v)$ and $v\not\in N(u)$, and  {\bf{N1}} applies to $u$ and $v$ in $\Gamma$. Therefore,  $N(u)\cap N(N(v))=N(v)\cap N(N(u))=\emptyset$. Since $\bar{N}(u)\subseteq N(u)$, $\bar{N}(v)\subseteq N(v)$, and  $\bar{N}(\bar{N}(u))\subseteq N(N(u))$, we obtain $\bar{N}(\bar{N}(u))\cap\bar{N}(v) \subseteq N(N(u))\cap N(v)=\emptyset$. Similarly, $\bar{N}(\bar{N}(v))\cap\bar{N}(u) \subseteq N(N(v))\cap N(u)=\emptyset$. Therefore,
 {\bf{N1}} holds in $\bar{\Gamma}$, as well.

To prove {\bf{N2}} it is enough to show that $\bar{\Gamma}$ is bi-transitive. Take $u_1,u_2,v_1,v_2\in \bar{V}$ with $u_1v_1,v_1u_2,u_2v_2 \in \bar{E}$. Then $u_1,u_2,v_1,v_2\in V$ and $u_1v_1,v_1u_2,u_2v_2 \in E$. As $\Gamma$ is bi-transitive, we have  $u_1v_2 \in E$. Since the edge $u_1v_2$ was not removed from $\Gamma$ to obtain $\bar{\Gamma}$, we also have $u_1v_2 \in \bar{E}$ and hence $\bar{\Gamma}$ is bi-transitive.

To prove {\bf{N3}},  take $u,v\in \bar{\Gamma}$ such that $u\cap \bar{N}(\bar{N}(v))=v\cap \bar{N}(\bar{N}(u))=\emptyset$ and $\bar{N}(u)\cap \bar{N}(v)\neq \emptyset$.
If $u\cap N(N(v))$ is non empty, then there exists $w\in V$ such that $vw,wu\in E$. Here $w\not\in\bar{\Gamma}$. Up to a rearrangement, see (iv) Proposition \ref{propA7ag2020}, we have either $w=m$, or $w=m-1$, or $m-1-r \le w \le m-2$  with $N(m-1-i)=\{m\}$ for $1\le i \le r$.
If $w=m$, then $mu\in E$ and hence $u=m-1$. If $w=m-1$, then $(m-1) u \in E$, and (iii) of Proposition \ref{propA7ag2020} yields $u=m$. If $w=m-1-i$ for some $1 \le i \le r$, then $wu \in E$. Since $N(m-i-1)=\{m\}$ this yields that $u=m$. This shows that in each of the three cases, either $u=m$, or $u=m-1$ hold, a contradiction as $m,m-1\not\in \bar{V}$. Similarly,
$v\cap N(N(u))$ cannot occur. Also, $\bar{N}(u)\cap \bar{N}(v)\neq \emptyset$ implies $N(u)\cap N(v)\neq \emptyset$. Therefore the hypotheses in {\bf{N3}} in $\Gamma$ are
satisfied, and {\bf{N3}} applies to $u,v$. Thus, $N^-(u)=N^-(v)$ whence $\bar{N}^-(u)=\bar{N}^-(v)$. Also, if $N(u)\subseteq N(v)$, then $\bar{N}(u)\subseteq \bar{N}(v)$.
Therefore, {\bf{N3}} holds true in $\bar{\Gamma}$.

Finally, assume that $\bar{\Gamma}$ contains a vertex $u$ with $\bar{N}(u)\neq \emptyset$. Then $N(u)=\{m-1-r,\ldots,m-2,m-1\}$. Therefore, if $v$ is another vertex of $\bar{\Gamma}$ without out-neighbor in $\bar{\Gamma}$, then $N(u)=N(v)$.  Since $d_iu$ and $d_iv$ are not edges of $\Gamma$, the hypotheses in {\bf{N3}} hold for $u,v$ in $\Gamma$. Therefore, $N^-(u)=N^-(v)$ whence the claim follows.
\end{proof}
We state a corollary of Proposition \ref{propB7ag2020}.
\begin{corollary}
\label{rem19ag} Assume that $\Gamma$ is a 2-cBMG which contains no two equivalent vertices. Then one of the following cases occurs.
\begin{itemize}
\item[\rm(I)] $N(u)=\{m\}$ only holds for $u=m-1$, and $\bar{\Gamma}$ is a 2-cBMG.
\item[\rm(II)] $N(u)=\{m\}$ only holds for $u=m-1,m-2$, and $\bar{\Gamma}$ is an almost 2-cBMG.
\end{itemize}
\end{corollary}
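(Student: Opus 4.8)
The statement is essentially a bookkeeping consequence of Propositions~\ref{propA7ag2020} and~\ref{propB7ag2020}, the only genuinely new input being a structural fact about the vertex $\ell$.

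First I would record that $N(\ell)=\{m\}$. By Proposition~\ref{propA7ag2020}(i), $\ell m$ is the unique symmetric edge at $m$ and $N(m)=\{\ell\}$. Since in a topological ordering of $\tilde{\Gamma}$ every out-neighbour of $\ell$ lies after $\ell$ or is joined to $\ell$ by a symmetric edge, and Lemma~\ref{lem4} forbids a second symmetric edge at $\ell$, Proposition~\ref{propA7ag2020}(iii) leaves no room for an out-neighbour of $\ell$ other than $m$; hence $N(\ell)=\{m\}$. (Alternatively, one argues directly: if $\ell$ had an out-neighbour $w\neq m$, then bi-transitivity applied to $m\to\ell\to w$ with any out-neighbour of $w$ forces $N(w)=\{\ell\}=N(m)$, and a further application to $z\to m\to\ell\to w$ together with $x\to w\to\ell\to m$ yields $N^{-}(w)=N^{-}(m)$, so $w\dot{\sim}m$, contradicting the hypothesis.) After the rearrangement of Proposition~\ref{propA7ag2020}(iv) this says $N(m-1)=\{m\}$.

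Next I would split into cases according to the vertices $d\notin\{\ell,m\}$ with $N(d)=\{m\}$ or $N(d)=\{\ell\}$. By Proposition~\ref{propA7ag2020}(ii) these are pairwise equivalent and only one of the two types can occur, so by the hypothesis there is at most one such vertex, say $d_1$. If there is none, or if there is one with $N(d_1)=\{\ell\}$, then the only $u$ with $N(u)=\{m\}$ is $u=m-1$, which is case~(I). If there is one with $N(d_1)=\{m\}$, then by (a rerun of) Proposition~\ref{propA7ag2020}(iv) — whose proof equally places a vertex with $N(d_1)=\{\ell\}$ at position $m-2$, since $d_1v\in E$ then forces $v=\ell=m-1$ — we may take $d_1=m-2$, and the vertices $u$ with $N(u)=\{m\}$ are exactly $m-1$ and $m-2$, which is case~(II). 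The point I expect to be easiest to overlook is that the possibility $N(d_1)=\{\ell\}$ must be folded into case~(I) rather than treated as a separate case.

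Finally I would identify $\bar{\Gamma}$. By Proposition~\ref{propB7ag2020}, $\bar{\Gamma}$ satisfies {\bf{N1}}, {\bf{N2}}, {\bf{N3}}, and any out-neighbour-free vertex of $\bar{\Gamma}$ is, as a vertex of $\Gamma$, equivalent to any other such vertex; by the no-equivalent-vertices hypothesis $\bar{\Gamma}$ has at most one such vertex, so in case~(II) $\bar{\Gamma}$ is an almost 2-cBMG. In case~(I) I would show there is no such vertex at all, so $\bar{\Gamma}$ is a genuine 2-cBMG: a hypothetical out-neighbour-free vertex $u$ of $\bar{\Gamma}$ has, by Proposition~\ref{propB7ag2020}, $N_\Gamma(u)=\{m-1\}$ when no $d_1$ exists — impossible, since then $u$ itself would be a vertex of the type excluded in case~(I) — and $N_\Gamma(u)=\{m-2,m-1\}=\{d_1,\ell\}$ when $N(d_1)=\{\ell\}$, whence $u\to d_1\to\ell\to m$ and bi-transitivity give $m\in N_\Gamma(u)$, contradicting $m\notin\{m-2,m-1\}$. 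The main obstacle is the first step, pinning down $N(\ell)=\{m\}$; the rest is routine once Propositions~\ref{propA7ag2020} and~\ref{propB7ag2020} are in hand.
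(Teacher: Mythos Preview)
Your overall strategy—extracting $N(\ell)=\{m\}$ and then reading off the dichotomy from Propositions~\ref{propA7ag2020} and~\ref{propB7ag2020}—is exactly the intended one, and your treatment of the cases ``no $d_1$'' and ``$N(d_1)=\{m\}$'' is correct.

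The gap is in the sub-case $N(d_1)=\{\ell\}$, which you fold into~(I). There your claim that a sink $u\in\bar V$ must have $N_\Gamma(u)=\{m-2,m-1\}=\{d_1,\ell\}$ is impossible on its face: since $N(d_1)=\{\ell\}$, the vertices $d_1$ and $\ell$ lie in \emph{opposite} colour classes, so no out-neighbourhood can contain both. (The line ``$N(u)=\{m-1-r,\dots,m-1\}$'' in the proof of Proposition~\ref{propB7ag2020} is written under the tacit assumption $N(d_i)=\{m\}$, when all removed vertices except $m$ share $\ell$'s colour.) The actual possibilities for $N_\Gamma(u)$ are $\{\ell\}$, $\{m\}$, $\{d_1\}$, and $\{m,d_1\}$; you correctly exclude the first three, but $\{m,d_1\}$ survives—your bi-transitivity step only yields $m\in N(u)$, which is consistent, not contradictory. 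And this case does occur: take $\Gamma=\langle 6\mid [1,2],[2,1],[3,4],[3,6],[4,5],[5,6],[6,5]\rangle$ with colour classes $\{1,3,5\}$, $\{2,4,6\}$. This is a 2-cBMG with no equivalent vertices; with the ordering $1,\dots,6$ one has $m=6$, $\ell=5$, $d_1=4$ with $N(d_1)=\{\ell\}$, and $N(3)=\{4,6\}=\{d_1,m\}$, so $\bar N(3)=\emptyset$ and $\bar\Gamma$ is only an almost 2-cBMG. The clean repair is not to force this sub-case into~(I) but to observe that, since $N(\ell)=\{m\}$ and $N(m)=\{\ell\}$, reversing the orientation of the symmetric edge $\ell m$ in $\tilde\Gamma$ interchanges the roles of $m$ and $\ell$; then $d_1$ has $N(d_1)=\{m_{\text{new}}\}$ and one lands squarely in case~(II).
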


\begin{remark}
\label{31ag} {\em {The following 2-cBMG shows that its truncated graph may have two equivalent vertices

$$\Gamma:=<7\mid [1, 3], [1, 7], [2, 3], [3, 4], [4, 3], [5, 7], [6, 7], [7, 6]>.$$

In fact, Case (II) occurs with  $\bar{\Gamma}=<4\mid [1,3],[2,3],[3,4],[4,3]>$ where $\bar{\Gamma}$ is a 2-cBMG which has two equivalent vertices, namely $1$ and $2$.}}
\end{remark}

\begin{remark}
\label{rem28ag} {\em{In case (II) $m-2\not\in N(m)$ by Corollary \ref{cor1}.}}
\end{remark}
\begin{remark}\label{august24_remark2}
\em{This shows that if all 2-cBMGs on $\{1,\ldots m-2\}$ together with all almost 2-cMBGs on  $\{1,\ldots, m-3\}$ are available, then all 2-cBMGs on $\{1,\ldots m\}$ can be obtained by adding edges with head in either $\{m-1,m\}$ or $\{m-2,m-1,m\}$.}
\end{remark}

Corollary \ref{rem19ag} shows that if (I) occurs, then removing the pair $\{m-1,m\}$ of vertices from $\Gamma$ we obtain a 2-cBMG on $\{1,\ldots,m-2\}$. If (II) occurs and the hypothesis $N(m-3)\supsetneq \{m-2,m\}$ is also satisfied, then removing the triple $\{m-2,m-1,m\}$ of vertices from $\Gamma$ we obtain a 2-cBMG on $\{1,\ldots,m-3\}$. If the arising 2-cBMG contains no two equivalent vertices, then we can go on by applying Corollary \ref{rem19ag} on it. Repeating this process we end up with a partition of the vertex set of $\Gamma$ in pairs and triples, unless the hypothesis fails at some step. From (II) and Remark \ref{rem28ag}, each  triple consists of three consecutive labels, say $i+2,i+1,i$ and the subgraph of $\Gamma$ with vertex set $i+2,i+1,i$ has three edges, namely $[i+1,i+2],[i+2,i+1]$ and $[i,i+2]$. Therefore, Corollary \ref{rem19ag} suggests the following construction. Start with a partition of the vertex set of the complete bipartite digraph whose members are pairs and triples of consecutive labels, delete all edges whose endpoints belong to different members and, in each triple $i,i+1,i+2$, also one of the edges $[i+2,i]$. The arising bipartite graph is a (non-connected) 2-cBMG and will be called an \emph{elementary} 2-cBMG. From each elementary 2-cBMG we can obtain more 2-cBMGs by adding edges.

\begin{remark}\label{rem31agA}
\em{In this example we show that the procedure described may fail. Let $$\Gamma:=<7\mid  [1, 2], [1, 4], [1, 7], [2, 5], [2, 6], [3, 4], [4, 3], [5, 7], [6, 7], [7, 6]>.$$
Then Case (II) occurs with $\bar{\Gamma}=<4\mid [1,2],[1,4],[3,4],[4,3]>$ where $\bar{\Gamma}$ is a 2-cBMG without two equivalent vertices. Applying Case (I) of Corollary \ref{rem19ag} on $\bar{\Gamma}$ gives $\bar{\bar{\Gamma}}:=<2\mid [1,2]>$ that is not a 2-cBMG, only an almost 2-cBMG.}
\end{remark}

\subsection{Case n=7}
A complete bipartite graph on $\{1,\ldots, 7\}$ has exactly three partitions, namely $\Pi_1=\{7,6,5\}\cup \{4,3\}\cup \{2,1\}$, $\Pi_2=\{7,6\}\cup \{5,4,3\}\cup \{2,1\}$, and $\Pi_3=\{7,6\}\cup \{5,4\}\cup \{3,2,1\}$, each of them gives rise to two elementary 2-cBMGs with at least three vertices in each color class.
$$
\begin{array}{lll}
{\mbox{$\Pi_{11}:=<7|[7,6],[6,7],[5,7],[4,3],[3,4],[2,1],[1,2]>$ with color classes $\{6,5,3,1\}$, $\{7,4,2\}$}}\\
{\mbox{$\Pi_{12}:=<7|[7,6],[6,7],[5,7],[4,3],[3,4],[2,1],[1,2]>$ with color classes $\{6,5,4,2\}$, $\{7,3,1\}$}}\\
{\mbox{$\Pi_{21}:=<7|[7,6],[6,7],[5,4],[4,5],[3,5],[2,1],[1,2]>$ with color classes $\{7,4,3,1\}$, $\{6,5,2\}$}}\\
{\mbox{$\Pi_{22}:=<7|[7,6],[6,7],[5,4],[4,5],[3,5],[2,1],[1,2]>$ with color classes $\{7,4,3,2\}$, $\{6,5,1\}$}}\\
{\mbox{$\Pi_{31}:=<7|[7,6],[6,7],[5,4],[4,5],[3,2],[2,3],[1,3]>$ with color classes $\{6,4,2,1\}$, $\{7,5,3\}$}}\\
{\mbox{$\Pi_{32}:=<7|[7,6],[6,7],[5,4],[4,5],[3,2],[2,3],[1,3]>$ with color classes $\{7,4,2,1\}$, $\{6,5,3\}$.}}
\end{array}
$$
There exists an isomorphism between any two of the above elementary 2-cBMGs which are also color-invariant, that is, equi-colored vertices are mapped to equi-colored vertices.
By adding edges to $\Pi_{11}$ we obtain exactly seven pairwise non-isomorphic connected 2-cBMGs with color classes $\{6,5,3,1\}$, $\{7,4,2\}$ and without two equivalent vertices:
$$
\begin{array}{llll}
\Gamma_1(7):=<7\mid [1, 2], [1, 4], [2, 1], [2, 3], [3, 4], [4, 3], [5, 2], [5, 4], [5, 7], [6, 2], [6, 4], [6, 7], [7, 1], [7, 3], [7,6]>\\
\Gamma_2(7):=<7\mid [1, 2], [2, 1], [3, 4], [4, 3], [5, 2], [5, 4], [5, 7], [6, 2], [6, 7], [7, 1], [7, 6]> \\
\Gamma_3(7):=<7\mid [1, 2], [1, 7], [2, 1], [2, 6], [3, 4], [4, 3], [5, 2], [5, 7], [6, 7], [7, 6]> \\
\Gamma_4(7):=<7\mid [1, 2], [1, 4], [1, 7], [2, 1], [2, 3], [2, 5], [2, 6], [3, 4], [4, 3], [5, 7], [6, 7], [7, 6]> \\
\Gamma_5(7):=<7\mid [1, 2], [2, 1], [3, 4], [4, 3], [5, 2], [5, 4], [5, 7], [6, 7], [7, 6]> \\
\Gamma_6(7):=<7\mid [1, 2], [2, 1], [3, 4], [4, 3], [5, 2], [5, 4], [5, 7], [6, 2], [6, 4], [6, 7], [7, 1], [7, 3], [7, 6]>\\
\Gamma_7(7):=<7\mid [1, 2], [2, 1], [3, 2], [3, 4], [3, 7], [4, 1], [4, 3], [4, 5], [4, 6], [5, 2], [5, 7], [6, 7], [7, 6]>.
\end{array}
$$
\subsection{Case n=8} In this case we have three pairwise non color-invariant isomorphic elementary 2-cBMGs, $\Pi_1,\Pi_2,\Pi_3$ with at least three vertices in each color class. $\Pi_1$ has no triangle while both $\Pi_2$ and $\Pi_3$ have two triangles.
$$
\begin{array}{lll}
{\mbox{$\Pi_1=<8|[8,7],[7,8],[6,5],[5,6],[4,3],[3,4],[1,2],[2,1]>$ with color classes $\{8,6,4,2\},\{7,5,3,1\}$}}\\
{\mbox{$\Pi_2=<8|[8,7],[7,8],[6,8],[5,4],[4,5],[3,5],[1,2],[2,1]>$ with color classes $\{8,5,1\}, \{7,6,4,3,2\}$}}\\
{\mbox{$\Pi_3=<8|[8,7],[7,8],[6,8],[5,4],[4,5],[3,5],[1,2],[2,1]>$ with color classes $\{8,4,3,1\},\{7,6,5,2\}$.}}\\
\end{array}
$$
By adding edges to $\Pi_2$ we obtain exactly eighteen pairwise non-isomorphic connected 2-cBMGs with color classes  $\{8,5,1\}$, $\{7,6,4,3,2\}$ and without two equivalent vertices:
$$
\begin{array}{llll}
\hspace{-1.2cm}
 \Gamma_8(1):=<8\mid
 [1, 2], [1, 4], [1, 6], [1, 7], [2, 1], [2, 5], [2, 8], [3, 1], [3, 5], [3, 8], [4, 5], [5, 4], [6, 5], [6, 8], [7,
5], [7, 8], [8, 4], [8, 7] >\\
\hspace{-1.2cm}
 \Gamma_8(2):=<8\mid
 [1, 2], [2, 1], [3, 1], [3, 5], [4, 5], [5, 4], [6, 1], [6, 5], [6, 8], [7, 1], [7, 5], [7, 8], [8, 2], [8, 3], [8,
4], [8, 7] >\\
\hspace{-1.2cm}
 \Gamma_8(3):=<8\mid
 [1, 2], [2, 1], [3, 1], [3, 5], [4, 1], [4, 5], [5, 2], [5, 4], [6, 8], [7, 8], [8, 7] >\\
 \hspace{-1.2cm}
 \Gamma_8(4):=<8\mid
 [1, 2], [2, 1], [3, 5], [4, 5], [5, 4], [6, 8], [7, 8], [8, 7] >\\
 \hspace{-1.2cm}
 \Gamma_8(5):=<8\mid
 [1, 2], [1, 4], [1, 6], [1, 7], [2, 1], [2, 5], [2, 8], [3, 1], [3, 5], [3, 8], [4, 5], [4, 8], [5, 4], [5, 6], [5,
7], [6, 8], [7, 8], [8, 7] >\\
\hspace{-1.2cm}
 \Gamma_8(6):=<8\mid
 [1, 2], [2, 1], [3, 5], [4, 5], [5, 4], [6, 1], [6, 5], [6, 8], [7, 8], [8, 7] >\\
 \hspace{-1.2cm}
 \Gamma_8(7):=<8\mid
 [1, 2], [1, 6], [1, 7], [2, 1], [2, 8], [3, 5], [4, 5], [5, 4], [6, 8], [7, 8], [8, 7] >\\
 \hspace{-1.2cm}
 \Gamma_8(8):=<8\mid
 [1, 2], [2, 1], [3, 1], [3, 5], [3, 8], [4, 5], [5, 4], [6, 5], [6, 8], [7, 8], [8, 7] >\\
\hspace{-1.2cm}
 \Gamma_8(9):=<8\mid
 [1, 2], [2, 1], [3, 1], [3, 5], [3, 8], [4, 5], [4, 8], [5, 4], [5, 6], [5, 7], [6, 8], [7, 8], [8, 7] >\\
 \hspace{-1.2cm}
 \Gamma_8(10):=<8\mid
 [1, 2], [2, 1], [3, 1], [3, 5], [3, 8], [4, 1], [4, 5], [5, 2], [5, 4], [6, 8], [7, 8], [8, 7] >\\
\hspace{-1.2cm}
 \Gamma_8(11):=<8\mid
 [1, 2], [1, 3], [1, 4], [1, 6], [1, 7], [2, 1], [2, 5], [2, 8], [3, 5], [4, 5], [5, 4], [6, 5], [6, 8], [7, 8], [8,
7] >\\
\hspace{-1.2cm}
 \Gamma_8(12):=<8\mid
 [1, 2], [1, 4], [2, 1], [2, 5], [3, 1], [3, 5], [4, 5], [5, 4], [6, 1], [6, 5], [6, 8], [7, 8], [8, 7] >\\
 \hspace{-1.2cm}
 \Gamma_8(13):=<8\mid
 [1, 2], [1, 3], [1, 4], [1, 6], [1, 7], [2, 1], [2, 5], [2, 8], [3, 5], [4, 5], [5, 4], [6, 8], [7, 8], [8, 7] >\\
 \hspace{-1.2cm}
  \Gamma_8(14):=<8\mid
 [1, 2], [2, 1], [3, 5], [3, 8], [4, 5], [5, 4], [6, 8], [7, 8], [8, 7] >\\
 \hspace{-1.2cm}
 \Gamma_8(15):=<8\mid
 [1, 2], [2, 1], [3, 1], [3, 5], [3, 8], [4, 1], [4, 5], [4, 8], [5, 2], [5, 4], [5, 6], [5, 7], [6, 8], [7, 8], [8,
7] >\\
\hspace{-1.2cm}
 \Gamma_8(16):=<8\mid
 [1, 2], [2, 1], [3, 5], [4, 5], [5, 4], [6, 1], [6, 8], [7, 8], [8, 7] >\\
 \hspace{-1.2cm}
 \Gamma_8(17):=<8\mid
 [1, 2], [2, 1], [3, 5], [3, 8], [4, 5], [4, 8], [5, 4], [5, 6], [5, 7], [6, 8], [7, 8], [8, 7] >\\
 \hspace{-1.2cm}
 \Gamma_8(18):=<8\mid
 [1, 2], [1, 3], [1, 4], [1, 6], [1, 7], [2, 1], [2, 5], [2, 8], [3, 5], [3, 8], [4, 5], [4, 8], [5, 4], [5, 6], [5,
7], [6, 8], [7, 8], [8, 7]>.
\end{array}
$$
\subsection{2-cBMGs containing two symmetric edges with a common vertex}
Let $\bar{V}$ be the set of all vertices of $\Gamma=\Gamma(V,E)$ which are shared by at least two symmetric edges of $\Gamma$. Consider the undirected graph $\Sigma=\Sigma(\bar{V},\bar{E})$ where $\bar{u}\bar{v}\in \bar{E}$ if and only if $\bar{u}\bar{v}$ is a symmetric edge of $\Gamma$.
\begin{proposition}
\label{pro28oct} Every connected component of $\Sigma$ is a complete bipartite graph.
\end{proposition}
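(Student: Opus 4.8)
The plan is to show that each connected component of $\Sigma$ is a complete bipartite graph, that is, if $\bar u$ and $\bar v$ lie in the same component of $\Sigma$ and have opposite colors in $\Gamma$, then $\bar u\bar v$ is a symmetric edge of $\Gamma$. The key tool is the observation, essentially contained in Lemma \ref{lem4bis} and Proposition \ref{prop11a}, that a symmetric edge $\bar u\bar v$ of $\Gamma$ gives rise to a directed circuit $\bar u\to\bar v\to\bar u$ of length $2$, and more generally that chains of symmetric edges produce longer directed circuits whose vertex sets span complete bipartite sub-digraphs. First I would reduce to the case of a path in $\Sigma$: it suffices to prove that if $\bar u\bar w$ and $\bar w\bar v$ are symmetric edges of $\Gamma$ with $\bar u$ and $\bar v$ of the same color (hence both opposite to $\bar w$), then $\bar u\bar v$ is again a symmetric edge, and, separately, that two same-colored vertices joined by such a path are equivalent in $\Gamma$; then an induction along any walk in the component yields the claim.

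The main step is the two-edge case. Suppose $\bar u\bar w,\bar w\bar u,\bar w\bar v,\bar v\bar w\in E$ with $\bar u,\bar v$ of the same color. Then $\bar u\to\bar w\to\bar v\to\bar w\to\bar u$ is a closed directed walk; extracting the directed circuit $\bar u\to\bar w\to\bar v$ and back via $\bar w$ is not immediately a circuit in the sense used earlier (edges would repeat), so instead I would argue directly with the axioms. From $\bar u\bar w,\bar w\bar v\in E$ and bi-transitivity applied with the symmetric edge $\bar v\bar w$ we get $\bar u\bar v$-type relations; more precisely, Lemma \ref{lem4} applied to $\bar w$ shows $\bar w$ is the endpoint of a unique symmetric edge unless $\bar w$ has equivalent partners, so I must instead use Lemma \ref{lem1}: the triple $(\bar w,\bar w,\bar u)$ is degenerate, so I would use the triple $(\bar u,\bar v,\bar w)$, noting $\bar u\bar w\in E$ and $\bar w\bar v\in E$ give $N(\bar v)\subseteq N(\bar u)$ and $N^-(\bar u)\subseteq N^-(\bar v)$, and symmetrically the triple $(\bar v,\bar u,\bar w)$ gives the reverse inclusions; hence by Lemma \ref{lem2} (with $u_1=u_2=\bar w$) the vertices $\bar u$ and $\bar v$ are equivalent. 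Since $\bar v\in N(\bar w)$ and $N(\bar u)=N(\bar v)$, also $\bar v\in N(\bar u)$; and since $\bar w\in N(\bar u)$ with $N^-(\bar u)=N^-(\bar v)$... — here I would instead use that $\bar u\in N^-(\bar w)$ forces, via equivalence, $\bar u\bar v$ and $\bar v\bar u$ both in $E$. This gives that any two same-colored vertices adjacent-in-$\Sigma$-through-a-common-neighbor are equivalent, and any two opposite-colored vertices at $\Sigma$-distance $1$ already span a symmetric edge by definition.

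From here the induction is routine: take $\bar u,\bar v$ in the same component of $\Sigma$ of opposite colors, a path $\bar u=\bar x_0,\bar x_1,\dots,\bar x_k=\bar v$ in $\Sigma$. The vertices with the same color as $\bar u$ among the $\bar x_i$ are pairwise equivalent (chaining the two-edge case), and likewise those with the color of $\bar v$; pick consecutive $\bar x_{i}\bar x_{i+1}$ with $\bar x_i\dot\sim\bar u$ and $\bar x_{i+1}\dot\sim\bar v$ — such a consecutive pair exists since the colors alternate is false in general, but along any $\Sigma$-path colors do alternate because $\Sigma$ only carries symmetric edges of the bipartite $\Gamma$, so $\bar x_i\dot\sim\bar u$ for even $i$ and $\bar x_i\dot\sim\bar v$ for odd $i$, and $k$ is odd. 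Then $\bar x_0\bar x_1$ is a symmetric edge, $\bar x_0\dot\sim\bar u$, $\bar x_1\dot\sim\bar v$, so $\bar u\bar v,\bar v\bar u\in E$, i.e. $\bar u\bar v\in\bar E$. The hard part will be handling the bookkeeping when $\bar w$ itself belongs to $\bar V$ and is joined to several vertices — but this is exactly what the equivalence conclusion from Lemma \ref{lem2} settles, since it collapses each color side of a component into a single equivalence class, after which completeness is forced by the single defining symmetric edge. One should also note the component may consist of a single edge (or, in the degenerate balanced case, a single vertex is impossible since $\bar V$ only contains endpoints of symmetric edges), and that the bipartition of each component is inherited from the $2$-coloring of $\Gamma$, so "complete bipartite" is meant with respect to that induced coloring.
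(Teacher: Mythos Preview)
Your approach is essentially correct and does prove the proposition, but it is considerably more involved than the paper's argument, and the exposition contains a couple of misstatements that you should clean up.

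First, the misstatements. Early on you write that it suffices to show ``if $\bar u\bar w$ and $\bar w\bar v$ are symmetric edges with $\bar u,\bar v$ of the same color, then $\bar u\bar v$ is again a symmetric edge.'' That is impossible in a bipartite graph: same-colored vertices are never adjacent. A few lines later you write ``$\bar v\in N(\bar w)$ and $N(\bar u)=N(\bar v)$, also $\bar v\in N(\bar u)$,'' which is a non sequitur and again would place an edge between equi-colored vertices. You immediately abandon both lines, and the argument you actually carry out --- apply Lemma~\ref{lem2} with $u_1=u_2=\bar w$ to get $\bar u\dot\sim\bar v$, chain along the alternating $\Sigma$-path to make each color side of the component a single $\dot\sim$-class, then transfer the symmetric edge $\bar x_0\bar x_1$ to $\bar u\bar v$ via $N^-(\bar x_1)=N^-(\bar v)$ and $N(\bar x_1)=N(\bar v)$ --- is valid. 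So the mathematics survives, but the write-up needs tightening.

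Second, the comparison. The paper's proof bypasses equivalence entirely. Given $\bar u,\bar w$ of opposite color in the same component of $\Sigma$, take a $\Sigma$-path $\bar u=\bar u_0,\bar u_1,\ldots,\bar u_n=\bar w$. Each step is a symmetric edge of $\Gamma$, so $\bar u_0\to\bar u_1\to\cdots\to\bar u_n$ and $\bar u_n\to\bar u_{n-1}\to\cdots\to\bar u_0$ are both directed paths in $\Gamma$. Repeated application of bi-transitivity (\textbf{N2}) to a directed path of odd length collapses it to a single edge, yielding $\bar u\bar w\in E$ and $\bar w\bar u\in E$ directly. That is the whole proof: two lines, no auxiliary equivalence classes, no induction bookkeeping. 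Your route does buy something extra --- you establish along the way that each color class of a component of $\Sigma$ is a single $\dot\sim$-class, which is a mildly stronger structural statement --- but for the proposition as stated the paper's direct use of \textbf{N2} is the cleaner argument.
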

 \begin{proof} Let $\Omega$ be a connected component of $\Sigma$. Obviously, $\Omega$ is a bipartite graph. Let $\bar{U}$ and $\bar{W}$ denote its color classes. Then $\Omega=\Omega(\bar{U}\cup \bar{W},\bar{F})$.
 For any two vertices $\bar{u}\in \bar{U}$ and $\bar{w}\in \bar{W}$ we have to show that $\bar{u}\bar{w}\in \bar{F}$. Since $\Omega$ connected, there is a path $\bar{u}=\bar{u}_0\bar{u}_1\cdots\bar{u}_n=\bar{w}$.
 Since $\bar{u}_i\bar{u}_{i+1}$, $i=0,\ldots,n-1$, is a symmetric edge of $\Gamma$, $\bar{u}=\bar{u}_0\rightarrow \bar{u}_1\cdots\rightarrow \cdots\bar{u}_n=\bar{w}$ is a directed path in $\Gamma$. By $\bf{N}2$,
 this yields $\bar{u}\bar{w}\in E$. Also, $\bar{w}=\bar{u}_n\rightarrow \bar{u}_{n-1}\cdots\rightarrow \cdots\bar{u}_0=\bar{u}$ is a directed path in $E$, and hence $\bar{w}\bar{u}\in E$. Therefore, $\bar{u}\bar{w}$ is a symmetric edge of $\Gamma$. Thus $\bar{u}\bar{w}\in \bar{F}$.
 \end{proof}
 Propositions \ref{pro28oct} and \ref{prop23ag} suggest how to construct a large family of 2-cBMGs containing many symmetric edges with common vertices. Take pairwise disjoint complete bipartite graphs $\Lambda_i=(U_i\cup W_i,E_i)$ with $i=1,\ldots,m$. Let $U=\cup_{i=1}^m U_i,W=\cup_{i=1}^m W_i$. Then $\Gamma=\Gamma(U\cup W,E)$ is defined to be the digraph whose edge set $E=F\cup G$ where $F$ consists of all (symmetric) edges of $\Lambda_i$ with $1\le i \le m$ while $G$ consists of all (non-symmetric) edges $uw$ where either $u\in U_1$, $w\in W_i$, or $w\in W_1$, $u\in U_i$, and $2\le i \le m$. Clearly, $\Gamma$ is a bipartite graph with color classes $U$ and $W$. 
 \begin{proposition}
 $\Gamma$ is a 2-cBMG.
 \end{proposition}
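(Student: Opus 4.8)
The plan is to verify the four axioms \textbf{N1}--\textbf{N4} directly, after first recording the out- and in-neighbourhoods of every vertex of $\Gamma$. Since each $\Lambda_i$ is a complete bipartite digraph, all symmetric edges of $\Gamma$ are internal to one of the $\Lambda_i$, and the only non-symmetric arcs join the ``hub'' classes $U_1$ and $W_1$ to the peripheral blobs; hence there are just four vertex types, and one computes $N(u)=W$ for $u\in U_1$, $N(u)=W_k$ for $u\in U_k$ with $k\ge 2$, and dually $N(w)=U$ for $w\in W_1$, $N(w)=U_k$ for $w\in W_k$ with $k\ge 2$, together with the corresponding formulas $N^-(u)=W_1$ or $W_k\cup W_1$ and $N^-(w)=U_1$ or $U_k\cup U_1$. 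Iterating, $N(N(\cdot))$ is again a whole colour class when one starts from a hub vertex, and it stays inside the blob $U_k$ (respectively $W_k$) when one starts from a peripheral vertex. These formulas are the backbone of the argument; in particular no out-neighbourhood is empty, so \textbf{N4} holds.

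For \textbf{N2} I would take a directed walk $u_1\to v_1\to u_2\to v_2$, with $u_1,u_2$ of one colour and $v_1,v_2$ of the other, and split into cases according to which blob contains $v_1$, respectively $u_2$. If either of these two ``middle'' vertices lies in a peripheral blob $\Lambda_k$ with $k\ge 2$, then the neighbourhood formulas force $v_1,u_2,v_2$ all into $\Lambda_k$, and $u_1v_2\in E$ follows because in each of the two possibilities for $u_1$ (peripheral in $\Lambda_k$, or a hub vertex) $N(u_1)$ contains all the $v$-coloured vertices of $\Lambda_k$. If instead the walk passes through a hub vertex in the middle, then $u_1$ is itself forced to be a hub vertex, so $N(u_1)$ is the entire opposite colour class and $u_1v_2\in E$ automatically. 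The case with the two colour classes interchanged is entirely analogous. This case analysis is routine, but it is where all the bookkeeping lives, and it is the step I expect to be the main (purely organisational) obstacle, precisely because the hub classes break the symmetry among the blobs.

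For \textbf{N1} I would first observe that the only pair of independent vertices of different colours is a pair $u\in U_a$, $v\in W_b$ with $a\ne b$ and $a,b\ge 2$: a hub vertex is joined by an arc to every vertex of the opposite colour, while for two vertices of the same colour \textbf{N1} is vacuous because $N(u)$ and $N(N(v))$ then lie in opposite colour classes. For such a peripheral pair, $N(u)\cap N(N(v))\subseteq W_a\cap W_b=\emptyset$ and, symmetrically, $N(v)\cap N(N(u))\subseteq U_b\cap U_a=\emptyset$, so \textbf{N1} holds. For \textbf{N3} I would show that its hypothesis is never met: whenever $u$ and $v$ have the same colour and a common out-neighbour, one can always exhibit a vertex $w$ with $u\to w\to v$---take $w$ inside the common peripheral blob when $u$ and $v$ are peripheral, and take $w$ in $W_1$ (respectively $U_1$) when one of $u,v$ is a hub vertex---so there is always a directed path of length $2$ between $u$ and $v$, and hence nothing remains to be checked in \textbf{N3}.

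Finally, as an independent cross-check one may observe that $\Gamma$ is connected and is explained by the depth-two rooted tree whose root has, as its subtrees, a star on $U_k\cup W_k$ for each $k\ge 2$ together with the leaves in $U_1\cup W_1$ attached directly to the root; then \textbf{N1}--\textbf{N3} would follow at once from \cite[Theorem 4]{geiss}. I would nevertheless keep the direct verification as the main proof, to match the style of the preceding results, and use the tree only as a sanity check.
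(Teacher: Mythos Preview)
Your proposal is correct and follows essentially the same route as the paper: compute the out-neighbourhoods for the four vertex types and then verify \textbf{N1}--\textbf{N4} directly, showing in particular that the hypothesis of \textbf{N3} is never satisfied. One small imprecision worth fixing: when $u$ is peripheral and $v$ is a hub vertex there is no $w$ with $u\to w\to v$ (since $N(u)=W_k$ and $N(w)=U_k$ for $w\in W_k$), only one with $v\to w\to u$; this still violates the hypothesis $u\cap N(N(v))=\emptyset$ of \textbf{N3}, so your conclusion stands.
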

 \begin{proof}
 From the construction of $\Gamma$  it is clear that $\bf{N}4$ holds. 
 
 To prove  $\bf{N}2$, take four vertices $u_1,u_2\in U$ and $w_1,w_2\in W$ such that $u_1w_1,w_1u_2,u_2w_2\in E$. We have to show $u_1w_2\in E$. From the construction, this holds for $u_1\in U_1$. Therefore, $u_1\in U_i$ with $i\ge 2$. Then $w_1\in W_i$. This yields $u_2\in U_i$ whence $w_2\in W_i$ follows. Thus $u_1,u_2,w_1,w_2$ are vertices of the complete bipartite graph $\Lambda_i$. Hence $u_1w_2\in E_i$, and by the construction, $u_1w_2\in E$.

 To prove $\bf{N}1$, take two vertices $u,v$ of $\Gamma$ that satisfy the hypothesis of $\bf{N}1$. Then neither $u$ nor $v$ is a vertex of $\Lambda_1$. Furthermore, the claim of $\bf{N}1$ clearly holds if $u$ and $v$ have the same color. Therefore, we may assume that $u\in U_i$ and $v\in W_j$ for some $2\le i,j \le m$.  Since $\Lambda_i$ is a complete bipartite graph, $i\neq j$ holds otherwise $u$ and $v$ would not be independent. In particular, $j\geq 2$. Hence $N(v)=U_j$ from which $N(N(v))=W_j$ follows. For $i\ge 2$, this argument shows that $N(u)=W_i$ and $N(N(u))=U_i$. Thus,  $N(v)\cap N(N(u))=U_j\cap U_i=\emptyset$ and $N(u)\cap N(N(v))=W_i\cap W_j=\emptyset$, whence the claim of $\bf{N}1$ follows. 
 
 Finally, we show that no two vertices $u,v$ of $\Gamma$ satisfy the hypotheses of $\bf{N}3$. By way of a contradiction, let $u,v$ be two vertices of $\Gamma$ that satisfy the hypotheses of $\bf{N}3$. Clearly, $u$ and $v$ have the same color. Without loss of generality, $u,v\in U$. Let $u\in U_i$, $v\in U_j$. From the hypotheses of $\bf{N}3$, $i\neq j$, and  we may assume that $1\le i< j \le m$. For $i\geq 2$, no vertex $w\in W$ exists for which either $uw,wv\in E$ or $wu,vw\in E$ and hence $u,v$ do not satisfy the hypotheses of $\bf{N}3$. For $i=1$, the only vertices $w\in W$ with $uw,wv\in E$ or $wu,vw\in E$ are those in $W_j$. But then $wv\in W_j$, and again the hypotheses of $\bf{N}3$ are not satisfied by $u,v$. 
 \end{proof}

\section{Computational classification of small 2-cBMGs}\label{ccs}
A straightforward exhaustive search of 2-cBMGs with the use of the MAGMA Algebra package \cite{magma} is possible for $2\le n \le 7$ since all subgraphs of the complete digraphs on $n$ vertices can be stoblue for $n\le 7$. By Remarks \ref{august24_remark1} and \ref{august24_remark2}, the resulting data-base can be used for the effective construction of larger 2-cBMGs.

Let $n\ge 3$ be an integer. For $2\le i \le n-1$, let $K_{i,n-i}$ be the complete bipartite digraph with color classes $B_1=\{1,\ldots i\}$ and $B_2=\{i+1,\ldots n\}$. Up to relabeling and interchanging $B_1$ with $B_2$, we may assume $i\le n/2$.
\begin{itemize}
\item $A(n,i)$:=set all pairwise non-isomorphic subgraphs $K_{i,n-i}$ satisfying {\bf{N1}}, {\bf{N2}}, {\bf{N3}}.
\item $B(n,i)$:=set of all  connected digraphs in $A(n,i)$.
\item $C(n,i)$:=set of all  digraphs in $A(n,i)$ which contain no two equivalent vertices.
\item $D(n,i)$:=set of all digraphs in $A(n,i)$ containing no vertex without out-neighbor.
\item $E(n,i)=B(n,i)\cap C(n,i)\cap D(n,i)$. 
\end{itemize}
\subsection{Case n=3} The unique value for $i$ is $i=1$, that is, $B_1=\{1\}$ and $B_2=\{2,3\}$. Then
\begin{itemize}
\item $\Gamma_1(3):=<3\mid[1,2],[1,3],[3,1]>$.
\end{itemize}

\subsection{Case n=4} The unique value for $i$ is $i=2$, that is, $B_1=\{1,2\}$ and $B_2=\{3,4\}$. Then
$$|A(4,2)|=26, |B(4,2)|=14, |C(4,2)|=5, |D(4,2)|=11, |E(4,2)|=2.$$
Moreover, $E(4,2)$ consists of the following two 2-cBMGs:
\begin{itemize}
\item $\Gamma_1(4):=<4\mid[1,4],[2,3],[2,4],[3,1],[3,2],[4,1]>$
\item $\Gamma_2(4):=<4\mid [1,4],[2,4],[3,1],[3,2],[4,1]>$.
\end{itemize}
\subsection{Case n=5} The unique value for $i$ is $i=2$, that is, $B_1=\{1,2\},B_2=\{3,4,5\}$. Then $$|A(5,2)|=122, |B(5,2)|=74, |C(4,2)|=51, |D(5,2)|=16, |E(5,2)|=4.$$
Moreover, $E(5,2)$ consists of the following four 2-cBMGs:
\begin{itemize}
\item $\Gamma_1(5):=<5\mid [1, 4], [1, 5], [2, 4], [3, 1], [3, 2], [4, 2], [5, 2] >$
\item $\Gamma_2(5):=<5\mid [1, 3], [2, 5], [3, 1], [4, 1], [4, 2], [5, 2]>$
\item $\Gamma_3(5):=<5\mid [1, 4], [2, 3], [2, 4], [2, 5], [3, 1], [4, 1], [5, 1], [5, 2]>$
\item $\Gamma_4(5):=<5\mid [1, 4], [2, 3], [2, 4], [3, 1], [3, 2], [4, 1], [5, 1], [5, 2]>$
\end{itemize}
\subsection{Case n=6} There are two values for $i$ namely $i=2,3$.
\subsubsection{$B_1=\{1,2\},B_2=\{3,4,5,6\}$} In this case,  $$|A(6,2)|=353, |B(6,2)|=175, |C(6,2)|=69, |D(6,2)|=33, |E(6,2)|=2.$$
Moreover, $E(4,2)$ consists of the following two 2-cBMGs:
\begin{itemize}
\item $\Gamma_1(6):=<6\mid [1, 4],[2, 4], [2, 5], [2, 6], [3, 1], [3, 2], [4, 1], [5,1], [5, 2],[6,1]>$
\item $\Gamma_2(6):=<6\mid [1, 3], [2, 5], [3, 1], [4, 1], [4, 2] [5, 2],[6,1] >$.
\end{itemize}
\subsubsection{$B_1=\{1,2,3\},B_2=\{4,5,6\}$} In this case, $$|A(6,3)|=347, |B(6,3)|=172, |C(6,3)|=149, |D(6,3)|=33, |E(6,3)|=8.$$
Moreover, $E(6,3)$ consists of the following eight 2-cBMGs:
\begin{itemize}
\item $\Gamma_3(6):=<6\mid [1, 6], [2, 4], [2, 6], [3, 6], [4, 1], [4, 2], [4, 3], [5, 1], [5, 2], [5,3], [6, 3]>$
\item $\Gamma_4(6):=<6\mid  [1, 5], [2, 4], [2, 5], [2, 6], [3, 6], [4, 1], [5, 1], [6, 3]>$
\item $\Gamma_5(6):=<6\mid  [1, 4], [1, 5], [1, 6], [2, 5], [3, 5], [3, 6], [4, 1], [4, 2], [4, 3], [5,
2], [6, 2], [6, 3]>$
\item $\Gamma_6(6):=<6\mid  [1, 5], [1, 6], [2, 5], [3, 5], [3, 6], [4, 1], [4, 2], [4, 3], [5, 2], [6,
2], [6, 3] >$
\item $\Gamma_7(6):=<6\mid 1, 4], [1, 5], [1, 6], [2, 4], [2, 6], [3, 6], [4, 3], [5, 1], [5, 2], [5,
3], [6, 3]>$
\item $\Gamma_8(6):=<6\mid [1, 4], [1, 6], [2, 6], [3, 6], [4, 2], [4, 3], [5, 1], [5, 2], [5, 3], [6,
3]>$
\item $\Gamma_9(6):=<6\mid [1, 4], [1, 6], [2, 4], [3, 6], [4, 2], [5, 1], [5, 2], [5, 3], [6, 3]>$
\item $\Gamma_{10}(6):=<6\mid [1, 5], [2, 4], [2, 5], [2, 6], [3, 6], [4, 1], [4, 2], [4, 3], [5, 1], [6,
3]>$.
\end{itemize}
\subsection{Case n=7} There are two values for $i$ namely $i=2,3$.
\subsubsection{$B_1=\{1,2\},B_2=\{3,4,5,6,7\}$} In this case, $$|A(7,3)|=647, |B(7,3)|=283, |C(7,3)|=571, |D(7,3)|=59, |E(7,3)|=1.$$
Moreover, $E(7,3)$ consists of just one 2-cBMG, namely $\Gamma_1(7)$; see Remark \ref{31ag}.

\subsubsection{$B_1=\{1,2,3\},B_2=\{4,5,6,7\}$} In this case, $$|A(7,3)|=555, |B(7,3)|=324, |C(7,3)|=352, |D(7,3)|=126, |E(7,3)|=21.$$
Moreover, $E(7,3)$ consists of the seven 2-cBMGs given in Section \ref{2cbmg} together with the following fourteen 2-cBMGs:
\begin{itemize}
\item $\Gamma_8(7):=<7\mid  [1, 4], [2, 4], [2, 5], [3, 7], [4, 1], [5, 1], [6, 1], [6, 2], [6, 3], [7, 3]>$
\item $\Gamma_9(7):=<7\mid  [1, 4], [2, 7], [3, 7], [4, 1], [5, 1], [6, 1], [6, 2], [6, 3], [7, 3]>$
\item $\Gamma_{10}(7):=<7\mid [1, 4], [2, 4], [3, 4], [3, 5], [3, 7], [4, 1], [5, 1], [5, 2], [6, 1], [6, 2], [6, 3], [7, 1], [7, 2], [7, 3]>$
\item $\Gamma_{11}(7):=<7\mid [1, 4], [2, 4], [2, 5], [3, 4], [3, 5], [3, 7], [4, 1], [5, 1], [6, 1], [6, 2], [6, 3], [7, 1], [7, 2], [7, 3]>$
\item $\Gamma_{12}(7):=<7\mid  [1, 4], [2, 4], [2, 5], [2, 6], [2, 7], [3, 7], [4, 1], [5, 3], [6, 1], [7, 3]>$
\item $\Gamma_{13}(7):=<7\mid [1, 4], [2, 4], [2, 6], [2, 7], [3, 7], [4, 1], [5, 1], [5, 2], [5, 3], [6, 3], [7, 3]>$
\item $\Gamma_{14}(7):=<7\mid [1, 4], [2, 4], [2, 5], [2, 7], [3, 4], [3, 7], [4, 1], [5, 1], [5, 3], [6, 1], [6, 2], [6, 3], [7, 1], [7, 3]>$
\item $\Gamma_{15}(7):=<7\mid [1, 5], [2, 7], [3, 7], [4, 2], [4, 3], [5, 1], [6, 1], [6, 2], [6, 3], [7,3]>$
\item $\Gamma_{16}(7):=<7\mid [1, 4], [1, 5], [1, 6], [2, 6], [3, 4], [3, 5], [3, 6], [3, 7], [4, 1], [4, 2], [5, 2], [6, 2], [7, 1], [7, 2], [7,3]>$
\item $\Gamma_{17}(7):=<7\mid  [1, 4], [2, 4], [2, 6], [2, 7], [3, 7], [4, 1], [5, 1], [5, 2], [5, 3], [6, 1], [6, 3], [7, 3]>$
\item $\Gamma_{18}(7):=<7\mid [1, 4], [2, 4], [2, 5], [2, 6], [2, 7], [3, 7], [4, 1], [5, 3], [6, 1], [6, 3], [7, 3]>$
\item $\Gamma_{19}(7):=<7\mid  [1, 4], [2, 4], [3, 7], [4, 1], [5, 1], [5, 2], [5, 3], [6, 1], [6, 2], [7, 3]>$
\item $\Gamma_{20}(7):=<7\mid[1, 4], [1, 5], [1, 6], [1, 7], [2, 6], [2, 7], [3, 7], [4, 1], [4, 2], [4, 3], [5, 2], [5, 3], [6, 3], [7, 3] >$
\item $\Gamma_{21}(7):=<7\mid[1, 4], [2, 4], [2, 5], [2, 7], [3, 4], [3, 5], [3, 7], [4, 1], [5, 1], [6, 1], [6, 2], [6, 3], [7, 1], [7, 3] >$.
\end{itemize}

\end{document}